\newtheorem{theorem}{Theorem}[section]
\newtheorem{prop}[theorem]{Proposition}
\newtheorem{lemma}[theorem]{Lemma}
\theoremstyle{definition}
\newtheorem*{remark*}{Remark}
\newtheorem{remark}[theorem]{Remark}
\theoremstyle{claim}
\numberwithin{equation}{section}
\newcommand{\BC}{\mathbf{C}}
\newcommand{\CE}{\mathcal{E}}
\renewcommand{\H}{\mathcal{H}}
\newcommand{\R}{\mathbb{R}}
\newcommand{\N}{\mathbb{N}}
\newcommand{\Z}{\mathbb{Z}}
\newcommand{\eps}{\varepsilon}
\newcommand{\sing}{\textnormal{sing}}
\newcommand{\weakly}{\rightharpoonup}
\newcommand{\del}{\partial}
\newcommand{\graph}{\textnormal{graph}}
\newcommand{\res}{\mathbin{\hspace{0.1em}\vrule height 1.3ex depth 0pt width 0.13ex\vrule height 0.13ex depth 0pt width 1.0ex}}
\title{Uniqueness of regular tangent cones for immersed stable hypersurfaces}
\author{Nick Edelen}
\address{Department of Mathematics, University of Notre Dame, Notre Dame, IN, 46556, USA}
\email{nedelen@nd.edu}
\author{Paul Minter}
\address{Department of Mathematics, Princeton University, Princeton, NJ 08540, USA; School of Mathematics, Institute for Advanced Study, 1 Einstein Dr., Princeton, NJ 08540, USA}
\email{pm6978@princeton.edu, pminter@ias.edu}
\begin{document}

\maketitle

\begin{abstract}
    We establish uniqueness and regularity results for tangent cones (at a point or at infinity) with isolated singularities arising from a given immersed stable minimal hypersurface with suitably small (non-immersed) singular set. In particular, our results allow the tangent cone to occur with any integer multiplicity.
\end{abstract}

\section{Introduction}

Let $M^n$ be a minimal hypersurface in Euclidean space with $0\in \sing(M) \equiv \overline{M}\setminus M$ a singular point of $M$. One may analyze the first-order singular behaviour of $M$ at $0$ by ``blowing-up'' $M$ at $0$. Indeed, a well-known consequence of the monotonicity of the mass ratio $r\mapsto r^{-n}\H^n(M\cap B_r(0))$ is that for any sequence of radii $r_i\downarrow 0$ there is a subsequence $r_{i^\prime}$ such that, as varifolds, $r_{i^\prime}^{-1} M\weakly \BC$. Such a varifold limit $\BC$ is a called a \emph{tangent cone} to $M$ at $0$, with the conical structure again due to the monotonicity formula for the mass ratio. One hopes to deduce properties of the singularity in $M$ from the tangent cone $\BC$, with a key issue being whether $\BC$ is unique (namely, independent of the sequence $r_i$ and the subsequent subsequence).

If the mass ratio is bounded at infinity, i.e. 
$\limsup_{r\to\infty}r^{-n}\H^n(M\cap B_r(0))<\infty,$
one can analogously consider ``blowing-down'' $M$. (Such an assumption holds when $M$ is area-minimizing, for instance.) In this setting, one takes a sequence of radii $r_i\uparrow \infty$, and the mass ratio upper bound allows us to extract limits $r_{i^\prime}^{-1} M\weakly \BC$. In this context $\BC$ is called a \emph{tangent cone at infinity} or a \emph{blow-down cone}. One wishes to deduce asymptotic properties of $M$ from $\BC$, with again uniqueness of $\BC$ a key issue. We note that analysis of blow-down cones was a key step in the resolution of the Bernstein problem for minimal graphs.

Few general results in these directions are known. Many results require the cone $\BC$ to be multiplicity one in an appropriate fashion (e.g. \cite{All72, Sim83a, Sim93, BK17}). In the case of stable minimal hypersurfaces, some results are known in certain cases, typically with some restriction on the possible singularities (e.g. \cite{Wic08, Wic14, MW23, Bel23, Min24}). 

Here, we resolve this question for stable immersed minimal hypersurfaces (with a small singular set) when $\BC$ is a \emph{regular cone}, i.e. $\sing(\BC) = \{0\}$ (or equivalently the \emph{link} $\Sigma := \BC\cap \del B_1(0)$ is smoothly embedded). When $\BC$ is a regular cone we may therefore write $\BC = q|\BC^\prime|$, where $\BC^\prime$ is a smooth cone with $\overline{\BC^\prime}\setminus \BC^\prime = \{0\}$ and $q\in \Z_{\geq 1}$. When $\BC$ has multiplicity one (i.e. $q=1$), the corresponding result was established by Simon \cite{Sim83a} (Simon's result holds more generally for minimal surfaces in any dimension and codimension, with no stability assumption).

\begin{theorem}\label{thm:main-1}
    Let $M$ be an immersed stable minimal hypersurface in $B_1^{n+1}(0)\subset\R^{n+1}$ (resp. $\R^{n+1}\setminus B_1^{n+1}(0)$) with $\H^{n-2}(\sing(M))=0$. Suppose there is a sequence $r_i\to 0$ (resp. $r_i\to \infty$) such that $r_i^{-1}M\weakly q|\BC|$ as varifolds, where $\BC$ is a regular cone. Then, $r^{-1}M\to q|\BC|$ as $r\to 0$ (resp. $r\to\infty$), and moreover the convergence is a rate bounded by $C|\log(r)|^{-\gamma}$ for some $C = C(\BC,q)$ and $\gamma = \gamma(\BC,q)>0$. More precisely, there is a (smooth) $q$-valued function $u:\BC\cap B_{1/2}^{n+1}(0)\to \mathcal{A}_q(\BC^\perp)$ (resp. $u:\BC\setminus \overline{B}^{n+1}_2(0)\to \mathcal{A}_q(\BC^\perp)$) with $\textnormal{graph}(u) = M$, with $u$ having the claimed decay rate at $0$ (resp. $\infty$). 
\end{theorem}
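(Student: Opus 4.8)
The plan is to follow the Simon-style \emph{Łojasiewicz–Simon} approach to uniqueness of tangent cones, adapted to the multiplicity-$q$ graphical setting via the $\mathcal{A}_q$-valued framework. The key enabling structural fact, which I would establish first using the stability hypothesis and the smallness of $\sing(M)$, is that on an annular region near $0$ (resp. near $\infty$) where $r^{-1}M$ is varifold-close to $q|\BC|$, the surface $M$ is in fact a smooth $q$-valued graph over the regular cone $\BC'$. This is where I expect the main obstacle to lie: one must upgrade varifold convergence to a point to $C^{1,\alpha}$ (indeed smooth) multi-valued graphical convergence \emph{on an entire dyadic annulus}, uniformly. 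The tool for this is the Wickramasekera-type sheeting/regularity theory for stable minimal hypersurfaces near a smooth multiplicity-$q$ cone; since $\mathcal{H}^{n-2}(\sing(M)) = 0$, the genuine (non-immersed) singular set is too small to carry any mass or to obstruct the sheeting estimates, so on the annulus $B_{\theta}\setminus B_{\theta/2}$ the hypersurface decomposes as $\graph(u)$ for a $q$-valued $u$ solving the minimal surface system, with $\|u\|$ small in a scale-invariant $C^{1,\alpha}$ norm. One then has to verify that this graphical representation is consistent across adjacent annuli so that it patches to a single multi-valued function on a punctured neighborhood (or exterior region).

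Once $M = \graph(u)$ with $u : \BC \cap (B_{1/2} \setminus \{0\}) \to \mathcal{A}_q(\BC^\perp)$ small, the second step is to set up the Łojasiewicz–Simon inequality for the relevant energy functional. Working in ``cylindrical'' coordinates $(\rho = -\log r, \omega \in \Sigma)$ on $\BC$, the area functional of $\graph(u)$ becomes a functional whose $L^2$-gradient flow in $\rho$ is, to leading order, a perturbation of $\partial_\rho^2 u + (\text{something}) \partial_\rho u = \mathcal{L}_\Sigma u + (\text{higher order})$, where $\mathcal{L}_\Sigma$ is the Jacobi operator of the link $\Sigma$ acting diagonally on the $q$ sheets. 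The minimality of $M$ plus the conical structure of the limit means $u(\rho, \cdot) \to 0$ (in $L^2(\Sigma; \mathcal{A}_q)$, along the subsequence) as $\rho \to \infty$, and the stationarity translates into the statement that $\rho \mapsto \|u(\rho,\cdot)\|$ is governed by a Łojasiewicz-type differential inequality. The key analytic input is a Łojasiewicz–Simon inequality for the area functional (equivalently, for the ``energy on the link'') at the critical point corresponding to $q$ copies of $\Sigma$; because $\Sigma$ is a smooth closed minimal submanifold of the sphere, the linearized operator $\mathcal{L}_\Sigma$ has discrete spectrum and the functional is analytic, so such an inequality holds — this is the standard ingredient, and the $\mathcal{A}_q$-valued structure does not obstruct it since near $0 \in \mathcal{A}_q(\R^N)$ the $q$-valued functions that arise decompose into honest single-valued pieces (the graph is genuinely $C^1$-close to $q$ \emph{embedded} sheets over $\Sigma$, possibly reconnecting, but the relevant Hilbert-space structure $L^2(\Sigma; \mathcal{A}_q)$ still carries the needed Fredholm/analyticity properties).

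The third step is the standard Łojasiewicz iteration: from the differential inequality $-\frac{d}{d\rho}\big(\text{energy-type quantity}\big) \geq c\,\big(\text{energy}\big)^{2-\theta}$ together with a Cauchy–Schwarz-type bound relating $\frac{d}{d\rho}\|u(\rho,\cdot)\|_{L^2}$ to the square root of the energy drop, one integrates to conclude that $\rho \mapsto u(\rho, \cdot)$ is Cauchy in $L^2(\Sigma;\mathcal{A}_q)$ as $\rho \to \infty$, i.e.\ $r^{-1} M$ converges to a \emph{unique} limit; since that limit agrees with $q|\BC|$ along the given subsequence, the full limit is $q|\BC|$. The logarithmic decay rate $C|\log r|^{-\gamma}$ (with $\gamma$ depending on the Łojasiewicz exponent $\theta$, hence on $\BC$ and $q$) is extracted from the same integration, exactly as in Simon's original argument; interpolation with the uniform $C^{1,\alpha}$ bounds from Step 1 then upgrades $L^2(\Sigma)$-decay to decay of $u$ in the stated pointwise (smooth, scale-invariant) sense. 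The case $r_i \to \infty$ is handled identically with $\rho = \log r$ and the roles of $0$ and $\infty$ interchanged. The only genuinely new difficulty compared to the multiplicity-one case of \cite{Sim83a} is Step 1 — producing the global-on-each-annulus $q$-valued graphical decomposition from stability and $\mathcal{H}^{n-2}(\sing M) = 0$ — and ensuring all estimates in Steps 2–3 are uniform in the number of sheets and insensitive to the (measure-zero, codimension-$\geq 3$) true singular set, which in particular cannot disconnect the annuli or violate the monotonicity-based a priori mass bounds.
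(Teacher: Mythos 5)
There is a genuine gap, and it sits exactly where you wave your hands in Step 2. You assert that the Łojasiewicz--Simon inequality goes through because ``near $0\in\mathcal{A}_q$ the $q$-valued functions decompose into honest single-valued pieces'' and because ``the Hilbert-space structure $L^2(\Sigma;\mathcal{A}_q)$ still carries the needed Fredholm/analyticity properties.'' Neither claim is right as stated: $L^2(\Sigma;\mathcal{A}_q)$ is only a metric space, not a Hilbert (or even linear) space, so Simon's Lyapunov--Schmidt/analyticity machinery cannot be applied to it directly; and smallness of $u$ gives only \emph{local} decomposition into single-valued sheets. Globally on $\Sigma$ the sheets may be permuted along non-contractible loops, so $u$ need not split into $q$ single-valued functions unless $\Sigma$ is simply connected --- and Theorem \ref{thm:main-3} of the paper shows this genuinely happens for stable examples, so this cannot be argued away. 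The paper's resolution, which is the main new idea and is absent from your proposal, is topological: using that differences of sheets solve uniformly elliptic equations, the coincidence set has codimension $\geq 2$, so $u$ is uniquely decomposable (Lemma \ref{lemma:ud}) and its graph is a single-valued function on one of the \emph{finitely many} degree-$q$ covers of $\Sigma$ (Lemmas \ref{lemma:covers}, \ref{lemma:ud-properties}); one then applies Simon's single-valued inequality on each cover and takes the worst constants (Proposition \ref{prop:ls}). Without this (or some substitute), your Step 3 has no Łojasiewicz inequality to iterate.

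A second, related point your scheme would run into: even granting the cover trick, the cover realized by $u(r,\cdot)$ can change from radius to radius (a $q$-cover of $\Sigma\times[a,b]$ need not be a product $\Sigma'\times[a,b]$), so one cannot lift the whole cylindrical evolution to a single fixed cover and literally re-run \cite{Sim83a}. The paper therefore uses the multi-valued Łojasiewicz--Simon inequality only slice-wise (at a.e. radius, where the slice is uniquely decomposable) and proves the decay directly for $q$-valued graphs by adapting the scheme of \cite{Sim96}, which is what produces the Dini estimate of Theorem \ref{thm:dini}; your ``standard Łojasiewicz iteration'' needs to be organized this way to be meaningful for $\mathcal{A}_q$-valued $u$. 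Finally, a smaller issue: the sheeting input for Step 1 should be Bellettini's theorem for stable \emph{immersions} \cite[Theorem 6]{Bel23}; Wickramasekera's theory excludes the classical/transverse self-intersection singularities an immersed $M$ is allowed to have here, so citing a ``Wickramasekera-type'' theorem does not cover the hypotheses of Theorem \ref{thm:main-1}.
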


\begin{remark*}
    Here $\sing(M)$ is the non-immersed singular set of $M$. If we instead assume that $M$ is embedded, Theorem \ref{thm:main-1} essentially follows from \cite{Sim83a} combined with the regularity theorem of Schoen--Simon \cite{SS81} (in fact by the strong maximum principle \cite{ilmanen}, in this case $M$ would necessarily be a single sheet with multiplicity $q$ near $0$).
\end{remark*}

Provided there is a suitable `immersed' regularity theorem near immersed singularities, we can also prove an analogous result when $\BC$ is an immersed regular cone (i.e. the link $\Sigma\subset S^n$ is smoothly immersed) with $0\in \BC$ a non-immersed singularity. For density $2$ immersed singularities, the required regularity theorem is known in the more general setting where $M$ is simply a stationary integral $n$-varifold with stable regular part and no triple junction singularities (see \cite[Theorem D]{MW23}).

\begin{theorem}\label{thm:main-2}
    Let $M$ be a stationary integral varifold in $B^{n+1}_1(0)\subset\R^{n+1}$ (resp. $\R^{n+1}\setminus B^{n+1}_1(0))$ which has stable regular part and no triple junction singularities. Suppose there is a sequence $r_i\to 0$ (resp. $r_i\to\infty$) such that $r_i^{-1}M\weakly |\BC|$ as varifolds, where $\BC$ is a (multiplicity one) immersed regular cone, having the property that all immersed singularities in $\BC$ have density $2$. Then, $r^{-1}M\to |\BC|$ as $r\to 0$ (resp. $r\to\infty$), and moreover the convergence is a rate bounded by $C|\log(r)|^{-\gamma}$ for some $C = C(\BC)$,$\gamma = \gamma(\BC)>0$.
\end{theorem}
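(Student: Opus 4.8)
The plan is to run the \L ojasiewicz--Simon scheme of \cite{Sim83a}, substituting the immersed regularity theorem \cite[Theorem D]{MW23} for the embedded regularity (Allard / Schoen--Simon) used there. I will describe the case $r_i\to 0$; the case $r_i\to\infty$ is handled by the identical argument run at the opposite end of the cylinder ($t\to+\infty$ below), the boundedness of the mass ratio needed there being automatic from $r_i^{-1}M\weakly|\BC|$ and monotonicity. Write $\BC$ as the image of a proper immersion of the product cone over $\bar\Sigma$, where $\bar\Sigma$ is a closed smooth $(n-1)$-manifold minimally immersed in $S^n$ with only transverse double self-intersections, so that $|\BC|$ has density $2$ along its singular locus. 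I work throughout in logarithmic-cylindrical coordinates $x=e^{t}\omega$, $(t,\omega)\in\R\times S^{n}$: a normal graph of a section $u$ over $\BC$ corresponds, after the substitution $v(t,\cdot):=e^{-t}u(e^{t}\,\cdot\,)$, to a $t$-family of sections $v(t,\cdot)$ of the normal bundle of $\bar\Sigma$ in $S^{n}$, and the assertion to be proved becomes: $v(t,\cdot)\to 0$ in $C^{2,\alpha}(\bar\Sigma)$ as $t\to-\infty$ at rate $|t|^{-\gamma}$.

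\emph{Step 1 (graphical representation on every small annulus).} Starting from the single sequence $r_i\to 0$ along which $r_i^{-1}M\weakly|\BC|$, I would argue by a continuity/compactness argument that the quantitative smallness hypotheses of \cite[Theorem D]{MW23} hold at \emph{every} scale $r\le\rho_{0}$, for some $\rho_{0}>0$ --- the relevant data (stationarity, stability of the regular part, absence of triple junction singularities, density $2$ at the immersed singularities of $\BC$) being scale invariant and stable under varifold convergence. Applying \cite[Theorem D]{MW23} at each such scale, the portion of $M$ in the annulus $\{r/2<|x|<2r\}$ is a smooth immersed minimal hypersurface equal to the normal graph of a section of the normal bundle of $\BC$, with scale-invariant $C^{2,\alpha}$ norm tending to $0$ as $r\to 0$; in particular $\sing(M)\cap B_{\rho_{0}}(0)\subseteq\{0\}$. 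Patching these annular graphs --- resolving $M$ near each self-intersection of $\BC$ into two transverse graphical sheets, which density $2$ together with the absence of triple junctions forces --- produces a single global section $v(t,\cdot)$ of the normal bundle of $\bar\Sigma$, small in $C^{2,\alpha}$ and defined for all $t\le T_{0}:=\log\rho_{0}$.

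\emph{Step 2 (the cylindrical equation and the \L ojasiewicz--Simon inequality).} Minimality of $M$ is equivalent to a second-order elliptic equation on $(-\infty,T_{0}]\times\bar\Sigma$ of the form $\partial_{t}^{2}v+a\,\partial_{t}v+\mathcal{L}_{\bar\Sigma}v=\mathcal{Q}(v,\nabla v,\nabla^{2}v;t)$, where $a=a(n)$ is a constant, $\mathcal{L}_{\bar\Sigma}$ is the self-adjoint elliptic operator on $\bar\Sigma$ governing Jacobi fields of the cone $\BC$, and $\mathcal{Q}$ is quadratic in its first arguments with explicit $t$-dependence only through exponentially small corrections. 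Since $\bar\Sigma$ is compact, $\mathcal{L}_{\bar\Sigma}$ is Fredholm with finite-dimensional kernel, and the energy $\mathcal{E}(w):=\int_{\bar\Sigma}\big(\tfrac12|\nabla w|^{2}-\tfrac12 q_{\bar\Sigma}w^{2}\big)\,d\mu+(\text{higher order})$ --- obtained from the area of normal graphs over $\bar\Sigma$, $q_{\bar\Sigma}$ being the potential of $\mathcal{L}_{\bar\Sigma}$, and with $L^{2}$-gradient $\mathcal{M}(w)=-\mathcal{L}_{\bar\Sigma}w+(\text{higher order})$ --- is real-analytic near $0$ in $C^{2,\alpha}(\bar\Sigma)$, the analyticity being in the unknown $w$ so that the merely smooth structure of the immersed $\bar\Sigma$ is no obstruction. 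Hence the \L ojasiewicz--Simon inequality of \cite{Sim83a} applies verbatim: there exist $C>0$, $\theta\in(0,\tfrac12]$, $\sigma>0$ with $|\mathcal{E}(w)-\mathcal{E}(0)|^{1-\theta}\le C\,\|\mathcal{M}(w)\|_{L^{2}(\bar\Sigma)}$ whenever $\|w\|_{C^{2,\alpha}(\bar\Sigma)}<\sigma$, its proof being a Lyapunov--Schmidt reduction entirely local on the compact manifold $\bar\Sigma$.

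\emph{Step 3 (decay, identification of the limit, and the main obstacle).} By the monotonicity formula for the mass ratio of $M$, the density-ratio deficit at scale $e^{t}$, say $\mathcal{N}(t)$, is non-decreasing in $t$ with $\mathcal{N}(t)\to 0$ as $t\to-\infty$ (as $\BC$ is a tangent cone), its derivative is comparable --- up to errors absorbed by smallness of $v$ --- to $\|\partial_{t}v(t,\cdot)\|_{L^{2}(\bar\Sigma)}^{2}$, and $\mathcal{N}(t)$ is comparable to $|\mathcal{E}(v(t,\cdot))-\mathcal{E}(0)|$ up to such errors. Combining these with interior elliptic estimates on the cylinder controlling $\|\mathcal{M}(v(t,\cdot))\|_{L^{2}}$ by $\|\partial_{t}v\|_{L^{2}}$ on neighbouring slices (Simon's interpolation lemma) and with Step 2, I obtain the \L ojasiewicz-type differential inequality for $\mathcal{N}$; the standard integration argument of \cite{Sim83a} then yields $\int_{-\infty}^{T_{0}}\|\partial_{t}v(t,\cdot)\|_{L^{2}(\bar\Sigma)}\,dt<\infty$ and $\|v(t,\cdot)-v_{-\infty}\|_{L^{2}(\bar\Sigma)}\le C|t|^{-\gamma}$ for some limiting section $v_{-\infty}$ and $\gamma=\gamma(\BC)>0$, which interior elliptic estimates upgrade to $C^{2,\alpha}$; here $v_{-\infty}$ solves the static equation, so its graph over $\BC$ is a minimal cone $C^{1}$-close to $\BC$. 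Finally $r_i^{-1}M\weakly|\BC|$ forces $v(\log r_i,\cdot)\to 0$, so $v_{-\infty}=0$, whence $r^{-1}M\to|\BC|$ as $r\to 0$ at the claimed rate $C|\log r|^{-\gamma}$ (recall $t=\log r$), with $M$ near $0$ the graph of $x\mapsto |x|\,v(\log|x|,x/|x|)$. I expect the principal difficulty to be Step 1: verifying that the quantitative hypotheses of \cite[Theorem D]{MW23} truly hold at \emph{all} small scales (not merely along $\{r_i\}$), patching the local two-sheeted graphs near the density-$2$ self-intersections of $\BC$ into one global normal section over the immersed link, and precluding the emergence of new pieces of $\sing(M)$ or of additional sheets as $r\to 0$; once $M$ is known to be a uniformly controlled normal graph over $\BC$ on every small annulus, Steps 2--3 are the by-now-classical \L ojasiewicz--Simon machinery.
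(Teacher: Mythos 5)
Your proposal is correct and follows essentially the same route as the paper: use the transverse $\eps$-regularity theorem \cite[Theorem D]{MW23} to separate the sheets at the density-$2$ self-intersections and represent $M$ as a single-valued normal graph over the abstract (immersed) link, so that the Łojasiewicz--Simon inequality of \cite[Theorem 3]{Sim83a} applies directly and the rest is Simon's original argument, with no multi-valued machinery needed. The one caveat is your Step 1: smallness of the hypotheses of \cite[Theorem D]{MW23} at \emph{every} small scale does not follow from compactness applied to the single sequence $r_i$ alone (only the density excess is controlled at all scales by monotonicity); as you yourself anticipate, graphicality must instead be propagated by the standard infimum-radius continuation argument interleaved with the decay estimates of Steps 2--3 --- exactly as in \cite{Sim83a} and as carried out explicitly in the paper's proof of Theorem \ref{thm:main-1} --- so Step 1 cannot be completed before, but only together with, the Łojasiewicz--Simon decay.
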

\begin{remark*}
    Theorem \ref{thm:main-2} includes, for example, the case where $\BC$ is the sum of two transverse copies of a given (multiplicity one) regular cone (e.g. Simons' cone). In this case, $M$ itself splits into the union of two minimal graphs, one over each copy of the cone.
\end{remark*}

In the context of Theorem \ref{thm:main-1}, one could ask whether the $q$-valued function globally splits into $q$ single-valued functions, or whether it is possible to have objects which resemble ``branched'' regular cones. It is always the case the $u$ will globally split into $q$ single-valued functions when the link $\Sigma$ is simply connected (c.f. \cite[Theorem 8.3]{SW16}). However, when it is not (e.g. $\Sigma = S^1\times S^m$ for suitably large $m$) then one can adapt the methods of Caffarelli--Hardt--Simon \cite{CHS} and Simon \cite{Sim85} to our framework in order to construct examples of genuinely immersed (but not embedded) minimal hypersurfaces with isolated singularities.

\begin{theorem}\label{thm:main-3}
    Let $\BC$ be a regular hypercone such that its link $\Sigma$ is not simply-connected. Then, there exists $A = A(\Sigma)\subset \{2,3,\dotsc\}$, with $\#A\geq 1$ such that for each $q\in A$, there exists an immersed (multiplicity one) minimal hypersurface $M\subset B^{n+1}_1(0)$ with $(\overline{M}\setminus M)\cap B^{n+1}_1(0) = \{0\}$, and with $q|\BC|$ being the (unique) tangent cone to $M$ at $0$, and moreover (in the notation of Theorem \ref{thm:main-1}) for each $r>0$ the function $u$ restricted to $\BC\cap \del B_r^{n+1}(0)$ cannot be written as a sum of $q$ single-valued $C^2$ functions. Moreover, if $\BC$ is strictly stable, then $M$ is stable as an immersion.
\end{theorem}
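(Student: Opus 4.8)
The plan is to carry out the Caffarelli--Hardt--Simon \cite{CHS}/Simon \cite{Sim85} construction of a minimal hypersurface with a prescribed isolated conical singularity, but over a \emph{connected covering} of the link $\Sigma$, and then to read off the non-splitting directly from the topology of that covering. Since $\Sigma$ is not simply connected, $\pi_1(\Sigma)$ is non-trivial; let $A$ be the set of those $q\geq 2$ for which $\Sigma$ admits a connected $q$-sheeted covering $\pi\colon\tilde\Sigma\to\Sigma$ (equivalently $\pi_1(\Sigma)$ has an index-$q$ subgroup), so that $A\neq\varnothing$; fix $q\in A$ and such a $\pi$. (Passing to the orientation double cover of $\Sigma$ in $S^n$ if necessary, which only changes $q$, we may assume $\tilde\Sigma$ is two-sided.) Composing $\pi$ with $\Sigma\hookrightarrow S^n\subset\R^{n+1}$ exhibits $\tilde\Sigma$ as a closed connected \emph{immersed} minimal hypersurface of $S^n$ with $\pi$ a local isometry, and the cone $\tilde\BC:=C(\tilde\Sigma)$ is then an immersed minimal hypercone of $\R^{n+1}$: smooth and immersed on $\R^{n+1}\setminus\{0\}$, with a single non-immersed point at $0$ (where $q\geq2$ sheets meet), and with image varifold exactly $q|\BC|$.

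Next I would do the linear analysis on $\tilde\BC$. Let $L_{\tilde\Sigma}=\Delta_{\tilde\Sigma}+|A_{\tilde\Sigma}|^2$ be the Jacobi operator of $\tilde\Sigma\subset S^n$. Because $\pi$ is a local isometry, $L^2(\tilde\Sigma)$ splits $L_{\tilde\Sigma}$-invariantly as $\pi^\ast L^2(\Sigma)\oplus W$, where $W$ (functions of fibrewise mean zero) is non-zero precisely because $\pi$ is a non-trivial connected covering, and is infinite-dimensional. Separating variables, $\mathcal L_{\tilde\BC}=\partial_r^2+\tfrac{n-1}{r}\partial_r+\tfrac1{r^2}L_{\tilde\Sigma}$, one finds $r^{\gamma}\phi(\omega)$ is a Jacobi field of $\tilde\BC$ exactly when $-L_{\tilde\Sigma}\phi=\mu\phi$ and $\gamma^2+(n-2)\gamma=\mu$. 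Since the spectrum of $-L_{\tilde\Sigma}$ on $W$ is unbounded above, choose $\phi\in W$ with eigenvalue $\mu>n-1$, so that the larger root $\gamma$ of $\gamma^2+(n-2)\gamma=\mu$ satisfies $\gamma>1$, and set $\psi:=r^{\gamma}\phi$, a Jacobi field of $\tilde\BC$ that does not descend to $\BC$ and decays faster than linearly at $0$. I also record: if $\BC$ (equivalently $\tilde\BC$) is strictly stable then $\tilde\BC$ is too, since averaging the positive ground state of $-L_{\tilde\Sigma}$ over the fibres of $\pi$ (a local isometry) produces a positive eigenfunction of $-L_{\Sigma}$ with the same eigenvalue, forcing the bottom eigenvalues to agree; thus the Hardy-type gap $\int_{\tilde\BC}(|\nabla\xi|^2-|A_{\tilde\BC}|^2\xi^2)\geq\delta\int_{\tilde\BC}|x|^{-2}\xi^2$ holds for some $\delta>0$.

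Now the nonlinear step. I seek $\tilde M=\graph_{\tilde\BC}(w)$, a normal graph solving the minimal surface equation over $\tilde\BC$, which in a tubular neighbourhood reads $\mathcal L_{\tilde\BC}w=Q(\,\cdot\,,w,\nabla w,\nabla^2 w)$ with $Q$ quadratically small of the appropriate homogeneity. Working in weighted Schauder spaces adapted to the conic operator $\mathcal L_{\tilde\BC}$ and to the rate $\gamma$ (chosen so that $\gamma$ is an indicial root with no indicial root in the window immediately to its right), one inverts $\mathcal L_{\tilde\BC}$ and solves for $w-\eps\psi$ by the contraction mapping principle for all sufficiently small $\eps>0$; this is precisely the Caffarelli--Hardt--Simon/Simon scheme, now run intrinsically over the covering cone $\tilde\BC$, the only change from the embedded case being bookkeeping since $\pi$ is a local isometry. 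After rescaling, this produces for each small $\eps>0$ the image $M=M_\eps\subset B_1^{n+1}(0)$ of a smooth immersed minimal hypersurface $\tilde M_\eps\to\tilde\BC$ with $w=O(r^{\gamma})$; hence $(\overline M\setminus M)\cap B_1^{n+1}(0)=\{0\}$, and since the perturbation decays faster than linearly, $q|\BC|$ is the unique tangent cone to $M$ at $0$ (this also follows from Theorem \ref{thm:main-1}). Moreover, by the implicit function theorem the non-descending component of $w$ at order $r^{\gamma}$ is $\sim\eps\neq0$, so $\tilde M_\eps$ is a genuine perturbation of $\tilde\BC$, not merely $\BC$ counted with multiplicity $q$.

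Finally, for each $r>0$ the slice $M\cap\del B_r^{n+1}(0)$ is a small graph over $\tilde\BC\cap\del B_r^{n+1}(0)$, hence topologically the connected $q$-sheeted covering $\tilde\Sigma$ of $\BC\cap\del B_r^{n+1}(0)=\Sigma$, its sheets realising the monodromy representation $\pi_1(\Sigma)\to S_q$ of $\pi$ (a non-descending leading term persists, so the sheets genuinely separate); since $q\geq2$ and $\tilde\Sigma$ is connected this representation is non-trivial, so the covering has no continuous section, and therefore the $q$-valued graph $u$ of Theorem \ref{thm:main-1}, restricted to $\BC\cap\del B_r^{n+1}(0)$, cannot be a sum of $q$ single-valued continuous functions --- a fortiori not $C^2$ ones. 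If $\BC$ is strictly stable, then so is $\tilde\BC$, and the strict gap $\delta|x|^{-2}$ is preserved under the $O(r^{\gamma})$ perturbation, so $M_\eps$ is stable as an immersion. I expect the main obstacle to be the nonlinear existence step: arranging the weighted-space framework so that the conic Jacobi operator $\mathcal L_{\tilde\BC}$ is invertible at the prescribed rate $\gamma$ and the quadratic error is controlled --- i.e. transferring the Caffarelli--Hardt--Simon/Simon analysis to the (immersed) covering cone $\tilde\BC$ --- together with the minor but necessary point of keeping a non-descending leading term so that ``immersed'' is upgraded to ``genuinely non-split''.
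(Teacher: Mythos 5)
Your overall scheme is the same as the paper's: fix a degree-$q$ cover $\pi:\tilde\Sigma\to\Sigma$, rewrite the stationarity equation for graphs over $\BC$ as an equation for a single-valued function on the cover, run the Caffarelli--Hardt--Simon/Simon existence theory seeded by an eigenmode that does not descend to $\Sigma$ and whose indicial rate exceeds that of the cone, and deduce stability from $\lambda_1(\tilde\Sigma)=\lambda_1(\Sigma)$. Your way of producing the non-descending mode (the $L$-invariant splitting $L^2(\tilde\Sigma)=\pi^*L^2(\Sigma)\oplus W$ together with unboundedness of the spectrum on $W$) is a clean alternative to the paper's Weyl-law count, and the fibre-averaging argument for the ground state is a fine alternative to the paper's multiplicity-one argument; these are only cosmetic differences.

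There is, however, a genuine gap in your final non-splitting step. The inference ``the covering $\tilde\Sigma\to\Sigma$ admits no continuous section, therefore $u$ restricted to $\BC\cap\del B_r$ cannot be written as a sum of $q$ single-valued continuous functions, a fortiori not $C^2$ ones'' is false. Since $u(r)$ takes values in $\mathcal{A}_q(\R)$ (the normal bundle is trivialised by $\nu_\BC$), the fibrewise order statistics (the $k$-th smallest value, $k=1,\dots,q$) always decompose a continuous $q$-valued real function into $q$ continuous single-valued functions, regardless of any monodromy. Moreover sheet crossings genuinely occur in your construction (for a connected double cover, a fibrewise-mean-zero eigenfunction must vanish somewhere), so the slice is only an immersed image rather than an embedded covering of $\Sigma$, and the sorted selections are continuous but fail to be $C^1$ exactly at transversal crossings. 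The content of the theorem is therefore the $C^1$/$C^2$ statement, and it does not follow from covering-space topology alone: one must argue that a $C^1$ selection cannot switch branches at a point where two sheets cross with distinct gradients, and that the set where two branches agree to first order is of codimension at least two (this is the elliptic unique-continuation input behind Lemma \ref{lemma:ud}, using that differences of branches solve a uniformly elliptic equation), so that a $C^2$ decomposition of $u(r)$ would yield a section of $\tilde\Sigma\to\Sigma$ over the complement of a codimension-two set; since $\pi_1(\Sigma\setminus Z)\to\pi_1(\Sigma)$ is surjective for such $Z$, the monodromy stays transitive and this contradicts connectedness of $\tilde\Sigma$ (using, as you note, that the non-descending leading term persists so the slice graph really realises the cover). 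Note also that the coincidence-set bound is obtained in the paper on $\Sigma\times[S,T]$ and passes to almost every slice by coarea, so asserting the conclusion ``for each $r>0$'' requires an extra word of care at this point. With this replacement --- essentially the unique-decomposability machinery of Section \ref{sec:prelim} --- your outline matches the paper's proof.
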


\begin{remark*}
    The restriction on the possible values of $q$ in Theorem \ref{thm:main-3} comes from the restriction on the possible degrees of covering spaces of $\Sigma$. Our examples will be induced by a single choice of cover, and so the specific cover does not change with the radius.
\end{remark*}

\begin{remark*}
    Any cone with link of the form $\Sigma = S^1\times\prod_{i=1}^p S^{n_i}$ for integers $p,n_i\geq 1$ and $\sum_{i=1}^p n_i \geq 6$ is area-minimising (\cite[Theorem 5.1.1]{Law91}) and so is strictly stable. Therefore, such $\BC$ give rise to genuinely immersed stable minimal hypersurfaces which have regular tangent cones of higher multiplicity (and for any $q\geq 2$ in the notation of Theorem \ref{thm:main-3}).
\end{remark*}

\textbf{Overview of the Proof.} We prove Theorem \ref{thm:main-1} by adapting the Łojasiewicz--Simon-type decay estimates from \cite{Sim83a} to the immersed setting, using the recent sheeting theorem for stable minimal immersions due to Bellettini \cite[Theorem 6]{Bel23}. Our basic idea is to reinterpret each slice $M\cap \del B_r$, originally being a $q$-valued graph over $\BC\cap \del B_r$, as instead a single-valued graph over some $q$-cover of $\BC\cap \del B_r\cong \Sigma$ (which cover depends on the $q$-valued graph over $\BC\cap \del B_r$). We can then apply the now-standard argument of \cite{Sim83a} to obtain a Łojasiewicz--Simon inequality for single-valued graphs over each $q$-cover of $\Sigma$. Since there are only finitely many possible $q$-covers (up to diffeomorphism) we then deduce a Łojasiewicz--Simon inequality for $q$-valued functions over $\Sigma$.

The main subtlety, which is the reason that the required estimates do not immediately follow from \cite{Sim83a} after passing to a suitable cover, is that covers do not always respect the cylindrical structure. In other words, if $T$ is a $q$-cover of $\Sigma\times [a,b]$, then $T$ \emph{need not} have the form $\Sigma^\prime\times [a,b]$ for some $q$-cover $\Sigma^\prime$ of $\Sigma$, i.e. the cover could change from slice-to-slice.
Instead, we will only appeal to the $q$-covers of $\Sigma$ to establish a suitable Łojasiewicz--Simon inequality, and prove the decay at the level of $q$-valued graphs directly. We adapt the ``simpler'' decay approach of \cite{Sim96} to multi-valued functions, and (perhaps interesting in its own right) also extend the approach in \cite{Sim96} to one which is ``symmetric'' in the radius, thereby giving not only uniqueness of tangent cones both at $0$ and $\infty$, but also an a priori Dini-type estimate, like in \cite{Ede21} and \cite{MN22}.

Indeed, our main estimate is the following. See Section \ref{sec:prelim} for details on the notation of $q$-valued functions.

\begin{theorem}[Dini-type a priori estimate]\label{thm:dini}
    Let $\BC$ be a minimal regular hypercone in $\R^{n+1}$ with link $\Sigma$, and let $q\in \Z_{\geq 1}$, $\beta > 0$. Then, there are $\eps = \eps(\Sigma,q)\in (0,1)$, $\alpha = \alpha(\Sigma,q)\in (0,1]$, and $C = C(\Sigma,q, \beta)\in (0,\infty)$ such that the following holds.

    Fix $0<\rho<R$. Let $g$ be a $C^2$ metric on $A_{R,\rho}:= B_R\setminus\overline{B}_\rho$, and let $M$ be an immersed minimal hypersurface in $A_{R,\rho}$ with respect to $g$ which is the graph over $\BC$ be a $C^3$ $q$-valued function $u$. Suppose $g$ and $u$ satisfy
    \begin{equation}\label{E:Dini-1}
        \sum^3_{k=0}|x|^{k-1}|\nabla^k u| < \eps, \qquad \sum^2_{k=0}|x|^k|D^k(g-g_{\textnormal{Eucl}})|\leq \delta_-|x|^{-\beta} + \delta_+|x|^\beta
    \end{equation}
    for some $\delta_+,\delta_- \geq 0$ satisfying $\delta_-\rho^{-\beta} + \delta_+R^\beta \leq \eps$.
Then we have the estimate
    \begin{align}
        \label{E:Dini-2}\sup_{r,s\in(\rho,R)}\left\|\mathcal{G}\left(\frac{u(r)}{r},\frac{u(s)}{s}\right)\right\|_{L^2(\Sigma)} & \leq C\int^R_\rho\left(\int_\Sigma \left|r\frac{\del}{\del r}\left(\frac{u(r\theta)}{r}\right)\right|^2 d\theta\right)^{1/2}\frac{dr}{r}\\
        & \label{E:Dini-3}\leq C^2\left(\int_{M\cap A_{R,\rho}}\frac{|\pi^\perp_M(X)|^2}{|X|^{n+2}}\ d\H^n(X) + \delta_-^2\rho^{-2\beta} + \delta_+^2R^{2\beta}\right)^{\alpha/2}.
    \end{align}
    In particular, if $M$ is minimal in $B_R(0)$ and $g = g_{\textnormal{Eucl}}$, then
    \begin{equation}
        \label{E:Dini-4}\int^R_\rho\left(\int_\Sigma \left|r\frac{\del}{\del r}\left(\frac{u(r\theta)}{r}\right)\right|^2 d\theta\right)^{1/2}\frac{dr}{r} \leq C\left(\Theta_M(R) - \Theta_M(\rho)\right)^{\alpha/2}.
    \end{equation}
\end{theorem}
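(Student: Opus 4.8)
The plan is to follow the ``symmetrized'' Łojasiewicz–Simon scheme of \cite{Sim83a, Sim96}, adapted to $q$-valued graphs and run simultaneously at both scales $0$ and $\infty$. The key objects are the rescaled graphs $v_r(\theta) := u(r\theta)/r \in \CA_q(\BC^\perp)$, which we regard as a curve $r\mapsto v_r$ in (a neighbourhood of $0$ in) $L^2(\Sigma,\CA_q)$ parametrized by $t = \log r$, and the ``energy'' functional $\mathcal{E}$ on $q$-valued graphs over $\Sigma$ whose critical points are exactly the minimal $q$-valued cones (equivalently, the constant-in-$r$ graphs). The minimality of $M$ with respect to $g$, together with the conical structure of the competitor $\BC$, should translate into a differential inequality of the form
\begin{equation*}
    \left\|\frac{\del v_r}{\del t}\right\|_{L^2(\Sigma)}^2 \geq c\left(-\frac{d}{dt}\mathcal{E}(v_r)\right) - (\text{error from }g - g_{\textnormal{Eucl}}),
\end{equation*}
i.e. a Łojasiewicz-type gradient flow structure, where the left side is (up to constants) the integrand $|x|^{-n-2}|\pi_M^\perp(X)|^2$ integrated over $M\cap\del B_r$ and the error is controlled by $\delta_-\rho^{-\beta} + \delta_+ R^\beta$ via \eqref{E:Dini-1}.

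The steps I would carry out, in order. First, establish the Łojasiewicz–Simon inequality for the energy $\mathcal{E}$ on $q$-valued graphs over $\Sigma$: by the discussion in the overview, decompose $v_r$ according to which $q$-cover of $\Sigma$ it ``lives on'' locally, apply the standard Simon-type Łojasiewicz inequality on each of the finitely many covers $\Sigma'$ (using analyticity of the area functional and the fact that $\mathcal{E}$ satisfies a Łojasiewicz inequality near each critical point on the compact manifold $\Sigma'$), and patch to get $|\mathcal{E}(v) - \mathcal{E}(0)|^{1-\alpha} \leq C\|\nabla\mathcal{E}(v)\|_{L^2}$ for $v$ small in $C^1$. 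Second, derive the two-sided monotonicity/almost-monotonicity identity: $\frac{d}{dr}\Theta_M(r)$ (or its weighted analogue) equals $\int_{M\cap\del B_r} |\pi_M^\perp(X)|^2 |X|^{-n-1}$ up to the $g$-error, which after integrating in $r$ gives the right-hand integral in \eqref{E:Dini-3}; this is where I fold in \eqref{E:Dini-4} as the special case $g = g_{\textnormal{Eucl}}$. Third — the crux — combine these: along the flow in $t$, the Łojasiewicz inequality plus the energy-dissipation identity yields, after the classical ODE argument (differentiating $(\mathcal{E}(v_r) - \mathcal{E}(0))^\alpha$ and using $\|\del_t v_r\|_{L^2} \geq \frac{d}{dt}\|\cdot\|$-type bounds), that $\int_\rho^R \|r\,\del_r v_r\|_{L^2(\Sigma)}\,\frac{dr}{r} < \infty$ with the bound \eqref{E:Dini-2}, and that this integral is dominated by $(\mathcal{E}(v_R) - \mathcal{E}(v_\rho))^{\alpha/2}$ plus errors, which in turn is \eqref{E:Dini-3}. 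The Dini bound \eqref{E:Dini-2} on the left, $\sup_{r,s}\|\mathcal{G}(v_r, v_s)\|_{L^2}$, is then immediate from the triangle inequality applied to $v_r - v_s = \int_s^r \del_\tau v_\tau\, d\tau$ in $L^2$.

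The main obstacle, as the authors flag in the overview, is that passing to a $q$-cover does not respect the cylindrical/conical product structure: a $q$-cover of $\Sigma\times[a,b]$ need not split as $\Sigma'\times[a,b]$, so one cannot simply lift $M$ to a single-valued graph over a fixed cover $\BC'$ and quote \cite{Sim83a} verbatim. The way around this is to use the covers \emph{only} to obtain the Łojasiewicz inequality on $\Sigma$ (a statement about a single slice, where covers do behave), and to prove the actual decay directly at the level of $q$-valued functions over $\BC$ — this forces us to set up the energy-dissipation identity, the first and second variation, and the ODE argument intrinsically for $\CA_q$-valued maps, using the metric space structure of $\CA_q$ and Almgren's/De~Lellis–Spadaro's $W^{1,2}$ theory for multi-valued functions. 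A secondary technical point is making the ``symmetric in the radius'' bookkeeping work: one must track the two error terms $\delta_-\rho^{-\beta}$ and $\delta_+ R^\beta$ through both the monotonicity identity and the ODE argument so that neither blows up as $\rho\to 0$ or $R\to\infty$ — this is the mechanism (as in \cite{Ede21, MN22}) that upgrades one-sided uniqueness into the two-sided Dini estimate, and it requires the smallness hypothesis $\delta_-\rho^{-\beta} + \delta_+ R^\beta \leq \eps$ to be used at the right places to absorb errors into the main terms.
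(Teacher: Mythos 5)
Your high-level route coincides with the paper's: pass to cylindrical coordinates $t=\log r$, $v=u(r\theta)/r$, prove a Łojasiewicz--Simon inequality for $q$-valued functions on $\Sigma$ by lifting each slice to one of the finitely many degree-$q$ covers (this is Proposition \ref{prop:ls}), and then combine an energy-dissipation argument with the Hardt--Simon inequality and the monotonicity formula to obtain \eqref{E:Dini-2}--\eqref{E:Dini-4}. The genuine gap is precisely at the step you call ``the crux''. On the cylinder $v$ satisfies a second-order-in-$t$ quasilinear equation $\ddot v+\dot v+\mathcal{M}(v)+\mathcal{R}(v)=f$, not a gradient flow, so neither your claimed inequality $\|\del_t v\|_{L^2}^2\geq c\left(-\tfrac{d}{dt}\mathcal{E}(v)\right)-\mathrm{error}$ nor the ``classical ODE argument'' of differentiating $(\mathcal{E}(v(t))-\mathcal{E}(0))^{\alpha}$ is available as stated: testing the equation with $\dot v$ produces the non-signed term $\tfrac{d}{dt}\tfrac12\|\dot v\|_{L^2}^2$ and the quasilinear error $\int\mathcal{R}(v)\dot v$ (involving $\ddot v$, $\nabla\dot v$), and in the annulus setting, with both endpoints free and forcing terms growing at both ends, there is no monotone Lyapunov quantity whose fractional power one can differentiate. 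The paper's substitute --- and its main technical content --- is the two-parameter difference inequality \eqref{E:thm-decay-main-1}, $I(s,t)^{2-\alpha}\leq C[I(s-1,t+1)-I(s,t)]+C\delta_-^2e^{-2\eps s}+C\delta_+^2e^{2\eps t}$, proved by selecting good slices via interior $L^p$ estimates (to control $\|\ddot v\|$, $\|\nabla\dot v\|$ and to make the slicewise Łojasiewicz inequality usable), followed by a discrete iteration (times $t_i=T-e^i$, with the cases $\alpha=1$ and $\alpha\in(0,1)$ treated separately) which converts it into the bound $\int\|\dot v\|_{L^2}\,dt\leq C\left(\int\|\dot v\|_{L^2}^2\,dt+\delta_-^2\rho^{-2\beta}+\delta_+^2R^{2\beta}\right)^{\alpha/2}$; this, together with the Hardt--Simon inequality, is exactly \eqref{E:Dini-3}. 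None of this mechanism appears in your plan, and as written your step three would not go through.

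A second, smaller omission: to apply the Łojasiewicz--Simon inequality slice-by-slice you must know that $u(\cdot,t)$ genuinely defines a degree-$q$ cover of $\Sigma$ for a.e.\ $t$, i.e.\ is non-branched and uniquely decomposable on the slice. In the paper this comes from the PDE satisfied by differences of local sheets, which forces the coincidence set to have codimension at least two in the cylinder and hence, by the coarea formula, in almost every slice (\eqref{E:ud-slices} via Lemma \ref{lemma:ud}). Your appeal to the metric-space structure of $\mathcal{A}_q$ and Almgren/De~Lellis--Spadaro $W^{1,2}$ theory is both unnecessary here (the functions involved locally decompose into single-valued $C^3$ sheets, which is what all the computations use) and insufficient by itself to supply this covering structure, so you should replace it with the coincidence-set argument.
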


\begin{remark}
    If $\BC$ is such that $\alpha = 1$ in Theorem \ref{thm:dini}, in turn one gets a better decay rate (namely, $C^{1,\gamma}$ for some $\gamma = \gamma(\Sigma,q)\in (0,1)$) as a conclusion in Theorem \ref{thm:main-1} and Theorem \ref{thm:main-2}. This corresponds to the case when $\BC$ is ``integrable''.
\end{remark}

Theorem \ref{thm:main-2} is essentially a direct consequence of Simon's original argument, just using the relevant transverse $\eps$-regularity theorem. This enables one to ``separate'' the two functions at each self-intersection in $\BC$, reducing to the multiplicity one case. In the special case where $\BC$ is the sum of two transverse multiplicity one cones, the $\eps$-regularity theorem (which here is \cite[Theorem D]{MW23}) truly allows one to re-run Simon's original argument, essentially treating each cone separately. (This is similar to how in Theorem \ref{thm:main-1}, if one knows the link is simply-connected then the proof is much simpler as one can reduce to single-valued functions and re-run the $q=1$ argument of \cite{Sim83a} to each.)

Regarding Theorem \ref{thm:main-3}, again the proof closely follows those seen in \cite{CHS, Sim85}, and is similar to the multiplicity one case (as one just needs to lift to a single cover and apply the arguments there). It would be interesting to know whether the examples constructed in Theorem \ref{thm:main-3} arise can arise as limits of smoothly immersed minimal hypersurfaces, similar to the Hardt--Simon foliation (c.f. \cite{HS85}).

\subsection{Organization}
In Section \ref{sec:prelim} we review the notation and establish some preliminary properties of multi-valued functions, as well as establish the Łojasiewicz--Simon inequality for multi-valued functions. In Section \ref{sec:estimates}, we prove the main decay result which will lead to our Dini-type estimate. In Section \ref{sec:proofs-1} we prove Theorem \ref{thm:dini} and Theorem \ref{thm:main-1}, in Section \ref{sec:proofs-2} we prove Theorem \ref{thm:main-2}, and in Section \ref{sec:proofs-3} we prove Theorem \ref{thm:main-3}. 

\subsection{Acknowledgements}
N.E. was supported by NSF grant DMS-2204301. This research was conducted during the period P.M. served as a Clay Research Fellow.

\section{Preliminaries and Notation}\label{sec:prelim}

We work in $\R^{n+1}$. Write $B_r$ for the open ball of radius $r$ centered at $0$ in $\R^{n+1}$, and write $A_{R,\rho}:= B_R\setminus \overline{B}_\rho$ for the open annulus centered at $0$ between radii $R$ and $\rho$ ($R>\rho$). Throughout this paper, $C$ will denote a generic constant $\geq 1$ which may increase from line to line. We use the notation $o(t)$ to indicate a quantity which $\to 0$ as $t\to 0$.

For an immersed hypersurface $M^n\subset B_R$ and $r<R$ we define the \emph{mass ratio} (at $0\in\R^n$) by
$$\Theta_M(r):= \frac{\H^n(M\cap B_r)}{\omega_n r^n}$$
where $\omega_n = \H^n(B_1^n(0))$. When $M$ is minimal, $\Theta_M(r)$ is non-decreasing in $r$ and so $\Theta_M(0):=\lim_{r\downarrow 0}\Theta_M(r)$ exists. For $x\in M$ we write $\left.\pi_M\right|_x$ for the orthogonal projection onto the tangent plane $T_xM$, and $\left.\pi^\perp_M\right|_x$ for the orthogonal projection onto $T_x^\perp M$.

Unless specified otherwise, for the duration of this paper we fix $\Sigma$ a smooth, closed Riemannian $(n-1)$-manifold in $\R^{n+1}$, and write $\nabla$ for its connection derivative (associated to the induced Levi--Civita connection). Given a function $u:\Sigma\times I\to \R$ for some interval $I$, we write $u(t)(x):= u(x,t)$ and $\dot{u}:= \frac{d}{dt}u$, where $(x,t)\in \Sigma \times I$. As in \cite{Sim83a}, we use the norms
\begin{equation}
    |u(t)|_k := |u(t)|_{C^k(\Sigma)}, \qquad |u(t)|^*_k := \sum^k_{j=0}|D^j_t u(t)|_{C^{k-j}(\Sigma)}.
\end{equation}
Let $\CE$ be a functional defined on $C^1(\Sigma)$ of the form
\begin{equation}\label{E:E}
    \mathcal{E}(v) = \int_\Sigma E(x,v,\nabla v)\ dx.
\end{equation}
We assume the function $E(x,p,q)$ is a smooth function on $\Sigma\times \R\times T\Sigma \ni (x,p,q)$ which satisfies the convexity condition
\begin{equation}\label{E:convexity}
    \left.\frac{d^2}{ds^2}\right|_{s=0} E(x,0,sp)\geq c|p|^2
\end{equation}
for some constant $c$ independent of $x$ and $p$. Moreover, as in \cite{Sim83a}, we assume $E(x,p,q)$ depends analytically on $p$ and $q$.

Write $\mathcal{M}$ for the negative $L^2(\Sigma)$-gradient of $\CE$, in the sense that
$$\int_\Sigma \mathcal{M}(v)\zeta = -\left.\frac{d}{ds}\right|_{s=0}\mathcal{E}(v+s\zeta) \qquad \text{for all }v,\zeta\in C^2(\Sigma).$$
The convexity assumption \eqref{E:convexity} implies that $\mathcal{M}$ is a quasi-linear, second-order, elliptic operator on $C^2(\Sigma)$, provided $|u|_{C^1(\Sigma)}$ is sufficiently small. We shall assume that $\mathcal{M}(0) = 0$ (i.e. that $v\equiv 0$ is a critical point of $\mathcal{E}$). We write $L$ for the linearization of $\mathcal{M}$ at $0$, i.e.
$$L(\zeta)= \left.\frac{d}{ds}\right|_{s=0}\mathcal{M}(s\zeta).$$
$L$ is a linear, second-order, self-adjoint elliptic operator on $\Sigma$. We refer the reader to \cite{Sim83a, Sim85} for more details.

We shall need the following basic topological fact.

\begin{lemma}\label{lemma:covers}
    Let $\Sigma$ be a closed Riemannian $n$-manifold and $q\in\Z_{\geq 1}$. Then, up to covering diffeomorphism, there are at most $N = N(\Sigma,q)$ (possibly disconnected) $q$-covers of $M$.
\end{lemma}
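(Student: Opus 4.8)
The plan is to strip away the geometry and reduce the statement to the classification of covering spaces together with the elementary fact that a finitely generated group has only finitely many subgroups of each finite index. The Riemannian metric plays no role (it is recorded only because $\Sigma$ carries one in the applications), and one may assume $\Sigma$ is connected: a closed manifold has finitely many components, a $q$-cover restricts to a $q$-cover over each component, and the total count is the (finite) product of the counts for the components. Since $\Sigma$ is a compact manifold its fundamental group $G := \pi_1(\Sigma)$ is finitely generated --- say by $k = k(\Sigma)$ elements --- for instance because $\Sigma$ is covered by finitely many coordinate balls, or because a smooth compact manifold has the homotopy type of a finite CW complex.

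Next I would use the classification of covering spaces: for connected $\Sigma$, isomorphism classes of covering maps $X \to \Sigma$ correspond to isomorphism classes of $G$-sets via the monodromy action on a fibre, with $q$-sheeted covers corresponding to $G$-sets of cardinality $q$ and connected covers to transitive $G$-sets. Since two covers related by a covering diffeomorphism are in particular isomorphic as covering spaces --- and any coarser notion of equivalence only decreases the count --- it suffices to bound the number of isomorphism classes of $G$-sets of cardinality $q$. Decomposing such a $G$-set into orbits, it is determined up to isomorphism by the multiset of its orbits; each orbit is a transitive $G$-set of some cardinality $d$ with $1 \le d \le q$, and these cardinalities sum to $q$. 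A transitive $G$-set of cardinality $d$ is isomorphic to a coset space $G/H$ with $[G:H] = d$, and $G/H \cong G/H'$ as $G$-sets precisely when $H, H'$ are conjugate, so the number of transitive $G$-sets of cardinality $d$ is at most the number of index-$d$ subgroups of $G$.

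Finally I would bound the number of index-$d$ subgroups of $G$ for each $d \le q$. Given such an $H$, left translation of $G$ on the $d$-element set $G/H$, together with a numbering of the cosets sending $H$ to the symbol $1$, produces a homomorphism $\rho_H : G \to S_d$ with $H = \rho_H^{-1}\!\left(\mathrm{Stab}_{S_d}(1)\right)$; hence $H \mapsto \rho_H$ is injective, and the number of index-$d$ subgroups is at most $\lvert\mathrm{Hom}(G, S_d)\rvert \le (d!)^{k} \le (q!)^{k}$, since a homomorphism out of a $k$-generated group is determined by the images of the generators. Combining these: the number of orbit-multisets is at most the number of partitions of $q$ (which is $p(q) < \infty$) times $(q!)^{k}$ raised to the number of parts (at most $q$), yielding the crude bound $N(\Sigma, q) \le p(q)\,(q!)^{kq}$, depending only on $q$ and on $k = k(\Sigma)$.

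There is no serious obstacle here: every ingredient is classical. The only points that require a little care are the bookkeeping that passes from ``covering diffeomorphism'' through isomorphism of $G$-sets to the orbit-multiset description --- so that the finiteness of finite-index subgroups is genuinely being applied to the right objects --- and the observation that the equivalence in the statement is at least as coarse as isomorphism of coverings over $\mathrm{id}_\Sigma$, so that bounding that finer count suffices.
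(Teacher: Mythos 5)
Your proof is correct and follows essentially the same route as the paper's sketch: finite generation of $\pi_1(\Sigma)$, the Galois/monodromy correspondence, and bounding the number of index-$d$ subgroups by injecting them into $\mathrm{Hom}(\pi_1(\Sigma),S_d)$, which has at most $(d!)^k$ elements. The one place you go beyond the paper is the handling of the ``possibly disconnected'' clause: the paper's sketch cites the bijection between degree-$q$ covers and index-$q$ subgroups, which literally accounts only for connected covers, whereas your orbit-multiset decomposition of $q$-element $\pi_1(\Sigma)$-sets (and the reduction to a connected base) treats disconnected covers explicitly --- a minor but genuine completion of the argument.
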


\begin{proof}
    We sketch the proof for completeness. Since $M$ is a smooth closed manifold, $\pi_1(\Sigma)$ is finitely generated (in fact, it is finitely presented); this can be seen by noting that $\Sigma$ is homotopically equivalent to some CW-complex. Moreover, covering maps of degree $q$ are, by the Galois correspondence, in bijection with the index $q$ subgroups of $\pi_1(\Sigma)$. Each such subgroup $G$ defines a homomorphism $\pi_1(\Sigma)\to \text{Sym}(\pi_1(\Sigma)/G)\cong S_q$ (via left-action on $\pi_1(\Sigma)/G$ by $\pi_1(\Sigma)$). As $\pi_1(\Sigma)$ is finitely generated, such a homomorphism is determined by where the $m$ generators of $\pi_1(\Sigma)$ are sent, and so there are at most $(q!)^m$ such homomorphisms. Since distinct index $q$ subgroups give rise to distinct homomorphisms, this gives that there are at most $(q!)^m$ such subgroups, and hence at most $(q!)^m$ covers of degree $q$.
\end{proof}

\subsection{Multi-Valued Functions}

We write $\mathcal{A}_q(\R)$ for the set of unordered $q$-tuples of real numbers. For $a,b\in \mathcal{A}_q(\R)$, write $\mathcal{G}(a,b) = \inf\sum_i |a_i-b_i|^2$, where the infimum is over all possible ways of decomposing $a = \sum_i \llbracket a_i\rrbracket$, $b = \sum_i \llbracket b_i\rrbracket$.

In this work, we will only be interested in multi-valued functions which do \emph{not} have branch points. As a result, some definitions related to multi-valued functions can be simplified. Indeed, given a smooth manifold $\Sigma$ and $k\in \Z_{\geq 0}$, we say a function $u:\Sigma\to \mathcal{A}_q(\R)$ is (\emph{non-branched}) $C^k$\emph{-regular} if near any point $x\in \Sigma$, there is a neighbourhood $U\ni x$ on which we may write
$$u = \sum_i \llbracket u_i\rrbracket$$
where $u_i:U\to \R$ is a $C^k$ single-valued function for each $i=1,\dotsc,q$. To avoid possible notational confusions with usual (possibly branched) multi-valued functions, we will write $C^k_q(\Sigma)$ for the space of (non-branched) $C^k$ $q$-valued functions. Given $u,v\in C^k_q(\Sigma)$, we write
$$\mathcal{G}(u,v)|_x := \mathcal{G}(u(x),v(x)) \qquad \text{for }x\in \Sigma.$$
For $u\in C^k_q(\Sigma)$, we define the \emph{coincidence set} 
$$\mathcal{K}_u:= \{x\in \Sigma: (u_i(x),\nabla u_i(x)) = (u_j(x),\nabla u_j(x))\text{ for some }i\neq j\}.$$
\begin{remark}
    For $u\in C^k_q(\Sigma)$ and $p\in \Sigma$, regardless of whether the decomposition into $u_i$ is unique, the set
    $$\{(u_i(p),\nabla u_i(p),\dotsc,\nabla^k u_i(p))\}_{i=1}^q$$
    is well-defined independent of the choice of decomposition.
\end{remark}
We define $|u|_{C^k(\Sigma)} := \sum_i |u_i|_{C^k(\Sigma)}$, and if $u\in C^k_q(\Sigma\times I)$ then $|u|_k = \sum_i |u_i|_k$ and $|u|_k^* = \sum_i |u_i|^*_k$; again, these definitions are independent of the choice of decomposition $u_i$. Similarly, we set
$$\|u\|_{L^2(\Sigma)}:= \left(\int_\Sigma\sum_i |u_i|^2 dx\right)^{1/2}.$$
We will often use the shorthand $|u(x)|^2:= \sum_i|u_i(x)|^2$.

If $F$ is a differentiable operator $C^k(\Sigma)\to C^\ell(\Sigma)$, then for $u\in C^k_q(\Sigma)$ and $f\in C^\ell_q(\Sigma)$, we interpret the equation $Fu = f$ to mean for every $x\in \Sigma$, there is a decomposition $u = \sum_i\llbracket u_i\rrbracket$ and $f = \sum_i\llbracket f_i\rrbracket$ on some neighbourhood of $x$ so that $Fu_i = f_i$ on $U$ for each $i=1,\dotsc,q$.

We say $u\in C^k_q(\Sigma)$ is \emph{uniquely decomposable} if locally about every point in $\Sigma$ the decomposition of $u$ is unique up to ordering, i.e. if for each $x\in \Sigma$, there is a neighbourhood $U$ of $x$ such that if $u = \sum_i \llbracket u_i\rrbracket = \sum_i \llbracket \widetilde{u}_i\rrbracket$ for $u_i,\widetilde{u}_i\in C^k(\Sigma)$, then $\widetilde{u}_i = u_{\sigma(i)}$ for some permutation $\sigma\in S_q$.

Secretly, the $q$-valued functions we are interested in are uniquely decomposable (this is not true for general $q$-valued functions). This follows from the following lemma, using that fact that the coincidence set for two minimal graphs always has codimension at least $2$.

\begin{lemma}\label{lemma:ud}
    If $u\in C^k_q(\Sigma)$ for some $k\geq 1$, and $\textnormal{codim}_\H(\mathcal{K}_u)>1$, then $u$ is uniquely decomposable.
\end{lemma}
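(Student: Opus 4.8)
The plan is to argue locally: fix $x \in \Sigma$ and show that on a sufficiently small connected neighbourhood $U$ of $x$ the decomposition of $u$ into $C^k$ single-valued pieces is unique up to a permutation. By definition of $C^k_q$-regularity we may choose $U$ connected on which $u = \sum_i \llbracket u_i \rrbracket$ for $C^k$ functions $u_1, \dots, u_q$. Suppose also $u = \sum_i \llbracket \widetilde u_i \rrbracket$ on $U$ for another collection of $C^k$ functions. The key point is to use the hypothesis $\mathrm{codim}_{\H}(\mathcal{K}_u) > 1$, which implies $U \setminus \mathcal{K}_u$ is connected (removing a set of Hausdorff codimension $> 1$, hence in particular topological codimension $\geq 2$, does not disconnect an open subset of a manifold — one should shrink $U$ first so that $\mathcal{K}_u \cap U$ is closed in $U$ and invoke this standard fact).

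First I would work on the open set $V := U \setminus \mathcal{K}_u$. At each point $p \in V$ the $q$ pairs $(u_i(p), \nabla u_i(p))$ are pairwise distinct (that is precisely the complement of the coincidence set), and by the Remark preceding the lemma the \emph{set} of these pairs is intrinsic to $u$, independent of the decomposition. Hence for each $p \in V$ there is a unique permutation relating $\{(\widetilde u_i(p), \nabla \widetilde u_i(p))\}_i$ to $\{(u_i(p), \nabla u_i(p))\}_i$; equivalently, for each index $j$ there is a well-defined index $\sigma(p)(j)$ with $(\widetilde u_j(p), \nabla \widetilde u_j(p)) = (u_{\sigma(p)(j)}(p), \nabla u_{\sigma(p)(j)}(p))$. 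The map $p \mapsto \sigma(p) \in S_q$ is locally constant on $V$: near any $p_0 \in V$ the values $u_1(p), \dots, u_q(p)$ stay distinct (shrinking to a smaller neighbourhood), so the index achieving $\widetilde u_j(p) = u_i(p)$ cannot jump without violating continuity of the $u_i$ and $\widetilde u_j$. Since $V$ is connected, $\sigma$ is a single permutation $\sigma \in S_q$, so $\widetilde u_j = u_{\sigma(j)}$ on all of $V$.

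Next I would extend this identity from $V$ to $U$ by continuity: $\widetilde u_j$ and $u_{\sigma(j)}$ are both continuous (indeed $C^k$) on $U$ and agree on the dense subset $V$ (dense because its complement $\mathcal{K}_u \cap U$ has codimension $> 1$, hence empty interior), so $\widetilde u_j \equiv u_{\sigma(j)}$ on $U$. This shows the decomposition on $U$ is unique up to ordering, which is exactly the definition of unique decomposability at $x$; since $x$ was arbitrary, $u$ is uniquely decomposable.

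The main obstacle — really the only non-bookkeeping point — is the topological input that removing the coincidence set does not disconnect a neighbourhood, i.e. that an open subset of a manifold minus a closed set of Hausdorff codimension strictly greater than $1$ remains connected. This is standard (a set of finite $(n-1)$-dimensional Hausdorff measure cannot separate, and codimension $> 1$ is more than enough), but one must be slightly careful to first shrink $U$ so that $\mathcal{K}_u \cap U$ is relatively closed in $U$ before applying it; the rest of the argument is a routine continuity-plus-connectedness bootstrap.
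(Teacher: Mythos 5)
Your argument is correct and is essentially the paper's own proof (which is only sketched there): continuity of the locally defined sheets and their gradients, together with connectedness of the complement of the coincidence set $\mathcal{K}_u$, which follows from the codimension $>1$ hypothesis. One small wording fix: in the local-constancy step it is the first-jet pairs $(u_i,\nabla u_i)$, not the values $u_i$ alone, that stay pairwise distinct near a point of $V$ (values can still cross where the gradients differ), but since you defined the matching permutation via these pairs the argument goes through verbatim.
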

\begin{proof}
    It suffices to prove the case $k=1$ as this implies the rest. The case $k=1$ however is immediate from the continuity properties of $u_i$ and their derivatives along with the fact that (each connected component of) $\Sigma\setminus \mathcal{K}_u$ is connected (due to the size assumption on $\mathcal{K}_u$).
\end{proof}

The key properties of uniquely decomposable multi-valued $C^k_q$ functions we need are summarised in the following lemma.

\begin{lemma}\label{lemma:ud-properties}
    Fix $k\geq 1$ and suppose $u\in C^k_q(\Sigma)$ is uniquely decomposable. Then:
    \begin{enumerate}
        \item[\textnormal{(i)}] if $U\subset\Sigma$ is contractible, then $u|_U = \llbracket u_1\rrbracket + \cdots + \llbracket u_q\rrbracket$, where $u_i\in C^k(U)$ are single-valued;
        \item [\textnormal{(ii)}] the set $\{(x,u_i(x)):x\in \Sigma,\, i=1,\dotsc,q\}$ is a (possibly disconnected) smooth degree $q$ cover of $\Sigma$, with $C^k$ covering map $\pi(x,u_i(x)) = x$.
    \end{enumerate}
\end{lemma}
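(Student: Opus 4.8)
The plan is to prove Lemma~\ref{lemma:ud-properties} by first establishing (i) via a monodromy/continuation argument on the contractible set $U$, and then deducing (ii) as an essentially formal consequence. For (i), the idea is that unique decomposability gives us, near every point $x\in U$, a genuinely unique (up to $S_q$) decomposition $u = \sum_i \llbracket u_i\rrbracket$ into single-valued $C^k$ functions on a small neighbourhood. On overlaps of two such neighbourhoods, the two local decompositions must agree up to a permutation of the labels; since the neighbourhoods can be taken connected and the permutation is locally constant, it is constant on each overlap. This gives a $\check{\mathrm{C}}$ech-style cocycle valued in $S_q$, i.e. a representation of the monodromy, and since $U$ is simply connected (being contractible) there is no monodromy: one can consistently relabel to patch the local branches into $q$ globally-defined single-valued functions $u_1,\dots,u_q \in C^k(U)$. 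Concretely, I would fix a basepoint $x_0\in U$, fix a labelling of the $q$ branches there, and for any $x\in U$ transport the labelling along a path from $x_0$ to $x$; path-independence follows from simple connectivity together with the local uniqueness, and the resulting $u_i$ inherit the $C^k$ regularity from the local pieces.

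For (ii), set $G := \{(x,u_i(x)) : x\in\Sigma,\ i=1,\dots,q\}\subset \Sigma\times\R$ with the projection $\pi(x,y)=x$. Locally over a contractible $U$, part (i) identifies $\pi^{-1}(U)$ with the disjoint union $\bigsqcup_{i=1}^q \graph(u_i)$, each piece being a $C^k$ graph and hence mapped diffeomorphically onto $U$ by $\pi$; this exhibits $G$ as a $C^k$ manifold and $\pi$ as a local $C^k$ diffeomorphism with fibres of cardinality exactly $q$. To upgrade "local diffeomorphism with finite fibres" to "covering map" I would invoke properness: the fibre over $x$ is the finite set $\{(x,u_i(x))\}$, and local boundedness of the $u_i$ (they are $C^k$, in particular continuous, on the compact-in-charts pieces) makes $\pi$ a proper map, so it is a covering map of degree $q$. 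The covering could be disconnected, which is fine and is exactly why the statement says "possibly disconnected."

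The main obstacle is really the bookkeeping in (i): making precise that the locally-defined permutations assemble into a well-defined monodromy homomorphism $\pi_1(U)\to S_q$ and that its triviality lets one globalize the branches, all while tracking $C^k$ regularity. The cleanest way to handle it is probably to avoid explicit cocycles and instead argue directly with path-lifting: define $G$ first as a topological space (the obvious subset of $\Sigma\times\R$ with the subspace topology), show $\pi\colon G\to\Sigma$ restricted over any sufficiently small ball is a trivial $q$-sheeted cover using unique decomposability, conclude $\pi$ is a covering map, and then note that a covering of a contractible (hence simply connected) space $U$ is trivial, so $\pi^{-1}(U)\cong U\times\{1,\dots,q\}$ and the $q$ sheets are the desired global $u_i$. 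This actually reverses the logical order — proving a local version of (ii) first and then reading off (i) — which I expect to be the slicker route; the $C^k$ regularity of the sheets then comes for free from the local structure. I do not anticipate any genuine difficulty beyond this, since the codimension hypothesis needed for unique decomposability has already been dispatched in Lemma~\ref{lemma:ud}.
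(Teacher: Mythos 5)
Your proposal is correct and follows essentially the same route as the paper: part (i) is proved by exactly the same basepoint-and-path continuation argument, with path-independence coming from simple connectedness of the contractible set $U$, and part (ii) is read off from the local single-valued $C^k$ structure of the decomposition. Your suggested "reversed" variant (establish the local $q$-sheeted covering structure first and then obtain (i) from triviality of covers over simply connected bases) is only a mild repackaging of the same idea, and the properness step is not even needed since local triviality over small balls already gives the covering property.
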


\begin{proof}
    First we prove (i). We may without loss of generality assume $U$ is connected, else we can work on each connected component individually. 
    
    Since $U$ is contractible, there exists an open simply connected set $U^\prime\subset \Sigma$ obeying $U\subset U^\prime$. Therefore, it suffices to prove the claim assuming $U$ is open and simply connected instead. Fix $x\in U$, and choose a neighbourhood $V\ni x$ for which on $V$ we have $u = \sum_i \llbracket u_i\rrbracket$, for some unique choice (up to reordering) of $u_i\in C^k(V)$. We extend the $u_i$ to all of $U$ as follows. Fix $y\in U$, and take a path $\gamma$ in $U$ joining $x$ to $y$. As $\gamma$ is compact we therefore can use the unique decomposability of $u$ to extend the definition of $u_i$ to $y$. All that remains is to show that this definition is independent of the choice of path $\gamma$. However this follows from the simply connectedness of $U$.

    Regarding (ii), the fact that the set is a $q$-cover follows from the definition. Moreover the regularity of the covering map follows simply because locally the $u_i$ are $C^k$ single-valued functions.
\end{proof}

The main purpose of Lemmas \ref{lemma:ud} and \ref{lemma:ud-properties} is to prove the following Łojasiewicz--Simon inequality for multi-valued functions. For shorthand, let us define $\mathcal{E}_q$ acting on $u\in C^1_q(\Sigma)$ by
$$\mathcal{E}_q(u):=\int_\Sigma \sum_i E(x,u_i(x),\nabla u_i(x))\ dx.$$
\begin{prop}\label{prop:ls}
    There exist constants $\alpha = \alpha(\Sigma,q)\in (0,1]$ and $\sigma = \sigma(\Sigma,q)>0$ such that if $u\in C^3_q(\Sigma)$ is uniquely decomposable and satisfies $|u|_{C^3}\leq \sigma$, then
    \begin{equation}\label{E:ls}
        |\mathcal{E}_q(u)-\mathcal{E}_q(0)|^{2-\alpha}\leq\int_\Sigma\sum^q_{i=1}|\mathcal{M}(u_i)|^2\ dx.
    \end{equation}
\end{prop}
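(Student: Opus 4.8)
The plan is to reduce the multi-valued Łojasiewicz--Simon inequality \eqref{E:ls} to the single-valued case, which is the classical result of Simon \cite{Sim83a} applied to the functional $\mathcal{E}$ (using the assumed analyticity of $E$ in $(p,q)$ and the convexity \eqref{E:convexity}). The obstruction to doing this naively is that a $q$-valued function need not split globally into single-valued pieces; this is precisely what Lemmas \ref{lemma:ud} and \ref{lemma:ud-properties} are designed to circumvent. First I would invoke Lemma \ref{lemma:ud-properties}(ii): since $u$ is uniquely decomposable, the graph $\widetilde{\Sigma}_u := \{(x,u_i(x))\}$ is a smooth (possibly disconnected) degree-$q$ cover $\pi\colon \widetilde{\Sigma}_u \to \Sigma$, and $u$ pulls back to a genuine single-valued $C^3$ function $\widetilde{u}$ on $\widetilde{\Sigma}_u$ with $\pi^*$ of the metric on $\Sigma$. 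On $\widetilde{\Sigma}_u$ one has the pulled-back functional $\widetilde{\mathcal{E}}(\widetilde v) := \int_{\widetilde{\Sigma}_u} (\pi^*E)(\tilde x, \widetilde v, \nabla \widetilde v)$, whose $L^2$-gradient $\widetilde{\mathcal{M}}$ is simply the lift of $\mathcal{M}$ (because $\pi$ is a local isometry, linearizations and gradients commute with the pullback), and one checks $\widetilde{\mathcal{E}}(\widetilde u) = \mathcal{E}_q(u)$, $\widetilde{\mathcal{E}}(0) = \mathcal{E}_q(0)$ (here $\widetilde{\mathcal{E}}(0) = q\cdot$(number-of-sheets-weighted sum) $= \mathcal{E}_q(0)$ by the covering property), and $\int_{\widetilde\Sigma_u}|\widetilde{\mathcal{M}}(\widetilde u)|^2 = \int_\Sigma \sum_i |\mathcal{M}(u_i)|^2$.

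Second, I would apply the single-valued Łojasiewicz--Simon inequality of \cite{Sim83a} on the closed manifold $\widetilde{\Sigma}_u$ to $\widetilde{\mathcal{E}}$: there exist $\widetilde\alpha \in (0,1]$, $\widetilde\sigma > 0$ depending on $\widetilde{\Sigma}_u$ (and on $E$, hence ultimately on $\Sigma$) such that $|\widetilde{\mathcal{E}}(\widetilde u) - \widetilde{\mathcal{E}}(0)|^{2-\widetilde\alpha} \leq \int_{\widetilde\Sigma_u}|\widetilde{\mathcal{M}}(\widetilde u)|^2$ whenever $|\widetilde u|_{C^3(\widetilde\Sigma_u)} \leq \widetilde\sigma$; the $C^3$-smallness transfers from $|u|_{C^3(\Sigma)} \leq \sigma$ since $\pi$ is a local isometry. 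Combining this with the three identities above gives \eqref{E:ls} for this particular $u$ with constant $\widetilde\alpha$, $\widetilde\sigma$ depending on the cover $\widetilde\Sigma_u$.

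Third — and this is the step that makes the constants uniform — I would invoke Lemma \ref{lemma:covers}: up to covering diffeomorphism there are only finitely many degree-$q$ covers $\widetilde\Sigma^{(1)}, \dots, \widetilde\Sigma^{(N)}$ of $\Sigma$, with $N = N(\Sigma,q)$. The pulled-back functional $\widetilde{\mathcal{E}}$ on $\widetilde\Sigma_u$ is, after identifying $\widetilde\Sigma_u$ with one of the $\widetilde\Sigma^{(j)}$ via a covering diffeomorphism, the functional built from $\pi_j^*E$ on the fixed manifold $\widetilde\Sigma^{(j)}$, and the Łojasiewicz exponent and threshold from \cite{Sim83a} depend only on this data. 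Hence, taking $\alpha := \min_j \widetilde\alpha_j$ and $\sigma := \min(\sigma_0, \min_j \widetilde\sigma_j)$ where $\sigma_0$ is whatever smallness is needed for $\mathcal{M}$ to be elliptic, we obtain constants depending only on $\Sigma$ and $q$, and \eqref{E:ls} holds for every uniquely decomposable $u \in C^3_q(\Sigma)$ with $|u|_{C^3} \leq \sigma$. The main subtlety to be careful about is the precise bookkeeping in identifying $\widetilde{\mathcal{E}}$ and its gradient on the abstract cover with the concrete data on $\widetilde\Sigma_u$ — in particular checking that the covering diffeomorphism is an isometry for the relevant metric, or else arguing that the finitely many metrics arising are controlled — but since the metric on $\Sigma$ is fixed and $\pi_j$ is a local isometry onto it, the pullback metric on each $\widetilde\Sigma^{(j)}$ is determined up to the finite choice of covering diffeomorphism, which is exactly what the finiteness in Lemma \ref{lemma:covers} provides.
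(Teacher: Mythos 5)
Your proposal is correct and follows essentially the same route as the paper: lift $u$ to a single-valued function on the degree-$q$ cover given by its graph (Lemma \ref{lemma:ud-properties}), apply Simon's single-valued Łojasiewicz--Simon inequality to the pulled-back functional there, and use the finiteness of $q$-covers from Lemma \ref{lemma:covers} to make $\alpha$ and $\sigma$ depend only on $(\Sigma,q)$. The bookkeeping you flag (identifying the cover and its pullback metric with one of the finitely many model covers) is handled in the paper exactly as you describe.
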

\begin{proof}
    From Lemma \ref{lemma:ud-properties}, if we let
    $$\Sigma^\prime := \{(x,u_i(x)):x\in\Sigma_i, i=1,\dotsc,q\}$$
    then $\Sigma^\prime$ has the structure of a $C^3$ $(n-1)$-manifold, and the projection map $\pi:\Sigma^\prime\to \Sigma$ given by $\pi(x,u_i(x)) = x$ is a $C^3$ covering map of degree $q$. Endow $\Sigma^\prime$ with the pullback metric $g^\prime = \pi^*g_\Sigma$ so that $\pi$ is a local isometry. We remark that, although we use $u$ to define the topological structure of $\Sigma^\prime$, from Lemma \ref{lemma:covers}, the Riemannian manifold $\Sigma^\prime$ is one of at most $N(\Sigma,q)$ (possibly disconnected) locally-isometric degree $q$ covering spaces of $\Sigma$, and in particular secretly depends only on $\Sigma$.

    Define the energy functional $\mathcal{E}^\prime$ on $C^1(\Sigma^\prime)$ by
    $$\mathcal{E}^\prime(v):=\int_{\Sigma^\prime}E(\pi(x),v(x),D\pi|_x\nabla v(x))\ d\H^{n-1}(x), \qquad v\in C^1(\Sigma^\prime),$$
    where $E$ is as in \eqref{E:E}. Note that if $v$ is supported in a set $U$ on which $\pi|_U$ is a diffeomorphism, then $\mathcal{E}^\prime(v) = \mathcal{E}(v\circ (\pi|_U)^{-1})$. It therefore follows from the definition of $\mathcal{M}$ that the negative $L^2(\Sigma^\prime)$-gradient $\mathcal{M}^\prime$ of $\mathcal{E}^\prime$ has the form
    $$\left.\mathcal{M}^\prime(v)\right|_x = \left.\mathcal{M}(v\circ\pi^{-1})\right|_{\pi(x)}$$
    for $\pi^{-1}$ a local inverse of $\pi$ near $x$. The new functional $\mathcal{E}^\prime$ satisfies the same ellipticity and analyticity hypotheses as $\mathcal{E}$, and therefore by \cite[Theorem 3]{Sim83a} $\mathcal{E}^\prime$ admits a Łojasiewicz--Simon inequality, i.e. we have for all $v\in C^3(\Sigma^\prime)$ with $|v|_{C^3(\Sigma^\prime)}\leq\sigma^\prime$,
    \begin{equation}\label{E:ls-1}
        |\mathcal{E}^\prime(v)-\mathcal{E}^\prime(0)|^{1-\alpha^\prime/2}\leq\|\mathcal{M}^\prime(v)\|_{L^2(\Sigma^\prime)}
    \end{equation}
    for some fixed $\alpha^\prime = \alpha^\prime(\mathcal{E}^\prime,\Sigma^\prime)\in (0,1]$, $\sigma^\prime = \sigma^\prime(\mathcal{E}^\prime,\Sigma^\prime)>0$. Now, if we define $v\in C^3(\Sigma^\prime)$ by $v(x,u_i(x)) = u_i(x)$, \eqref{E:ls-1} becomes
    \begin{equation}\label{E:ls-2}
        \left|\int_\Sigma\sum^q_{i=1}\left[E(x,u_i(x),\nabla u_i(x)) - E(x,0,0)\right]\ dx\right|^{2-\alpha^\prime} \leq \int_\Sigma\sum^q_{i=1}|\mathcal{M}(u_i)|^2\ dx
    \end{equation}
    which is valid provided $|u|_{C^3(\Sigma)}\leq \sigma^\prime/2$. We highlight that the dependency of \eqref{E:ls-2} on $\Sigma^\prime$ (i.e. on $u$) comes only through the constant $\alpha^\prime,\sigma^\prime$, of which there are only finitely many due to \ref{lemma:covers}. Thus, as there are at most $N(\Sigma,q)$ such degree $q$ covers $\Sigma^\prime$ of $\Sigma$, the result follows by taking $\alpha = \min_{\Sigma^\prime}\alpha(\mathcal{E}^\prime,\Sigma^\prime)\in (0,1]$ and $\sigma = \frac{1}{2}\min_{\Sigma^\prime}\sigma(\mathcal{E}^\prime,\Sigma^\prime)$.
\end{proof}
\begin{remark}
    In the case that $\Sigma$ is simply connected, then the above proof essentially reduces to applying directly \cite[Theorem 3]{Sim83a}. Indeed, in this case the only cover is $\Sigma$ itself, and by Lemma \ref{lemma:ud-properties}(i) we have that $u$ globally decomposes on all of $\Sigma$. Thus, one can just apply \cite[Theorem 3]{Sim83a} to each component $u_i$. Indeed, when $\Sigma$ is simply connected, our proofs essentially reduce to applying \cite{Sim83a} to the single-valued functions in the decomposition.
\end{remark}

\section{Estimates}\label{sec:estimates}

Fix $-\infty<S<T<\infty$. In this section we are interested in $u\in C^3_q(\Sigma\times [S,T])$ solving the PDE
\begin{equation}\label{E:PDE}
    \ddot{u} + \dot{u} + \mathcal{M}(u) + \mathcal{R}(u) = f
\end{equation}
where $f = f(x,t)$ is a $C^1_q(\Sigma\times [S,T])$ function satisfying
\begin{equation}\label{E:f-assumption}
    |f(t)|^*_1 \leq \delta_- e^{-\eps t} + \delta_+ e^{\eps t}
\end{equation}
for some fixed $\eps>0$ and $\delta_{-},\delta_+\in [0,1)$ small (possibly $0$), and $\mathcal{R}(u)$ has the form
\begin{equation}\label{E:R}
    \mathcal{R}(u) = (a_1\nabla^2 u + a_2)\dot{u} + a_3\nabla \dot{u} + a_4\ddot{u}
\end{equation}
where $a_i = a_i(x,u,\nabla u, \dot{u})$ are $C^1$ functions satisfying $a_i(x,0,0,0) \equiv 0$.
Note in particular that \eqref{E:R} implies
$$|\mathcal{R}(u(t))|\leq \Psi(|u(t)|^*_2)\left(|\dot{u}(t)| + |\nabla\dot{u}(t)| + |\ddot{u}(t)|\right),$$
for some $\Psi \in C^0([0, \infty))$ (determined by the $a_i$) satisfying $\Psi(0) = 0$.  Recall that we interpret solving \eqref{E:PDE} as
\begin{equation}\label{E:PDE-2}
    \ddot{u}_i + \dot{u}_i + \mathcal{M}(u_i) + \mathcal{R}(u_i) = f_i \qquad \text{for }i=1,\dotsc,q
\end{equation}
at every point in $\Sigma\times [S,T]$. In particular note that, by the fundamental theorem of calculus and the structure of $\mathcal{M}$ and $\mathcal{R}$, for any $i\neq j$, the difference $w=u_i - u_j$ (locally) solves a PDE of the form
\begin{equation}\label{E:PDE-difference}
    \ddot{w}+\dot{w} + Lw + e_1 \nabla^2 w + e_2\nabla \dot{w} + e_3\ddot{w} + e_4\nabla w + e_5\dot{w} + e_6 w = h
\end{equation}
for $e_k = e_k(x,t)$ and $h$ continuous functions with $|e_k(t)|\leq \tilde \Psi(|u(t)|_2^*)$ for some $\tilde \Psi \in C^0[0, \infty)$ with $\tilde \Psi(0) = 0$.  In particular, provided $|u|^*_2$ is sufficiently small (depending only on $\mathcal{M}$ and $\mathcal{R}$), \eqref{E:PDE-difference} is uniformly elliptic, and thus by standard elliptic theory (see e.g. \cite{NV17}) the singular set of $w$, namely $\{x:(w(x),\nabla w(x)) = (0,0)\}$, has dimension $\leq n-2$.

In particular, this tells us that the coincidence set of $u$ has dimension $\leq n-2$ in $\Sigma\times [S,T]$, and thus by Lemma \ref{lemma:ud} $u$ is uniquely decomposable. By the coarea formula it follows that the coincide set of $u(t)$ in $\Sigma\times\{t\}$ has dimension $\leq n-3$ for a.e. $t>0$, and hence again by Lemma \ref{lemma:ud},
\begin{equation}\label{E:ud-slices}
    u(t) \text{ is uniquely decomposable in $\Sigma\times\{t\}$ for a.e. $t$.}
\end{equation}
From Lemma \ref{lemma:ud-properties}(i), we deduce that there is a radius $r_0 = r_0(\Sigma)$ so that in any region of the form $B_{r_0}(x)\times [\max\{S,t-1\},\min\{T,t+1\}]$, for $x\in \Sigma$ and $t\in [S,T]$, $u$ (uniquely) decomposes into single-valued $C^3$ functions. This decomposition of $u$ and \eqref{E:PDE-2} imply that $f$ decomposes (not necessarily uniquely) into single-valued $C^1$ functions in each region $B_{r_0}(x)\times [\max\{S,t-1\},\min\{T,t+1\}]$ also.

Our main decay estimate is the following. For ease of notation we write (recalling the shorthand $|u|^2 = \sum_i |u_i|^2$)
\begin{equation}\label{E:I-defn}
    I(s,t):= \int^t_s\int_\Sigma |\dot{u}|^2.
\end{equation}
\begin{theorem}\label{thm:decay-main}
    In the notation above, there are constants $\eta = \eta(\Sigma,q,\mathcal{M},\mathcal{R})\in (0,1)$ and $\alpha = \alpha(\Sigma, q)\in (0,1]$ such that the following holds. If $u:\Sigma\times [S,T]\to \R$ solves \eqref{E:PDE}, and
    \begin{equation}
        |u|^*_3\leq\eta, \qquad \max\{\delta_- e^{-\eps S}, \delta_+ e^{\eps T}\}\leq \eta, 
    \end{equation}
    then for all $S+1\leq s<t\leq T-1$, we have the inequality
    \begin{equation}\label{E:thm-decay-main-1}
        I(s,t)^{2-\alpha} \leq C\left[I(s-1,t+1)-I(s,t)\right] + C\delta_-^2 e^{-2\eps s} + C\delta_+^2 e^{2\eps t}.
    \end{equation}
    Consequently, we get the Dini-type estimate
    \begin{align}
        \|\mathcal{G}(u(s),u(t))\|^{2/\alpha}_{L^2(\Sigma)} & \leq C\left(\int^T_S\|\dot{u}(t)\|_{L^2(\Sigma)}\ dt\right)^{2/\alpha}\label{E:thm-decay-main-2}\\
        & \leq C^2\int^T_S\|\dot{u}(t)\|^2_{L^2(\Sigma)}\ dt + C\delta_-^2e^{-2\eps S} + C\delta_+^2 e^{2\eps T}\label{E:thm-decay-main-3}
    \end{align}
    for all $S\leq s<t\leq T$. Here $C = C(\Sigma,q, \eps)>0$.
\end{theorem}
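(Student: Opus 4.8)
\emph{Strategy.} The plan is to run the Łojasiewicz--Simon decay scheme of \cite{Sim83a}, in the streamlined form of \cite{Sim96} and its two-sided refinements \cite{Ede21, MN22}, but directly for $q$-valued graphs. It rests on three ingredients. \textbf{(1) The energy identity.} Testing the locally-decomposed equation \eqref{E:PDE-2} against $\dot u_i$, summing over $i$, integrating over $\Sigma$, and using that $\mathcal{M}$ is the negative $L^2$-gradient of $\mathcal E$, one obtains, with $\widetilde H(t):=\big(\mathcal E_q(u(t))-\mathcal E_q(0)\big)-\tfrac12\|\dot u(t)\|_{L^2(\Sigma)}^2$,
\[
\tfrac{d}{dt}\widetilde H(t)=\int_\Sigma|\dot u(t)|^2+\int_\Sigma\mathcal R(u(t))\cdot\dot u(t)-\int_\Sigma f(t)\cdot\dot u(t),
\]
every term being independent of the local choice of decomposition. \textbf{(2) A Caccioppoli-type estimate.} Differentiating \eqref{E:PDE} in $t$, $v:=\dot u$ solves a uniformly elliptic equation on the cylinder whose lower-order terms have $O(\eta)$ coefficients (by the structure \eqref{E:R}, $a_i(x,0,0,0)\equiv0$, and $|u|_3^*\le\eta$); testing against $\zeta(t)^2v_i$ for a temporal cutoff $\zeta\equiv1$ on $[s,t]$ supported in $[s-1,t+1]$, summing in $i$, integrating by parts (no spatial boundary since $\Sigma$ is closed, no temporal boundary since $\zeta$ is compactly supported), and absorbing the small terms gives
\[
\int_s^t\!\!\int_\Sigma\big(|\nabla\dot u|^2+|\ddot u|^2\big)\le C\,I(s-1,t+1)+C\big(\delta_-^2e^{-2\eps s}+\delta_+^2e^{2\eps t}\big),
\]
the last term from the bound \eqref{E:f-assumption} on $\dot f$. \textbf{(3) The multi-valued Łojasiewicz--Simon inequality, Proposition \ref{prop:ls}.} By the remarks preceding \eqref{E:ud-slices}, uniform ellipticity of \eqref{E:PDE-difference} forces $\mathrm{codim}_{\mathcal H}\mathcal K_u>1$ on the cylinder, so $u$ is uniquely decomposable (Lemma \ref{lemma:ud}), and by coarea $u(t)$ is uniquely decomposable for a.e.\ $t$; since $|u(t)|_{C^3(\Sigma)}\le\eta$ is small, Proposition \ref{prop:ls} applies at a.e.\ slice. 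Substituting $\mathcal M(u_i)=f_i-\ddot u_i-\dot u_i-\mathcal R(u_i)$ from \eqref{E:PDE-2} and $|\mathcal R(u)|\le C\eta(|\dot u|+|\nabla\dot u|+|\ddot u|)$ yields, for a.e.\ $t$,
\[
\big|\mathcal E_q(u(t))-\mathcal E_q(0)\big|^{2-\alpha}\le C\int_\Sigma\big(|f|^2+|\dot u|^2+|\nabla\dot u|^2+|\ddot u|^2\big).
\]

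\emph{Proof of \eqref{E:thm-decay-main-1}.} One combines these as in \cite{Sim96}, carried out symmetrically in $t$. Integrating the energy identity over $[s,t]$, estimating $\int\mathcal R(u)\cdot\dot u$ and $\int f\cdot\dot u$ by Cauchy--Schwarz and Young (the integration of the weights in \eqref{E:f-assumption} producing the $\eps$-dependence of $C$), and using the Caccioppoli estimate to absorb the second-order contribution of $\mathcal R$, one finds for $\eta=\eta(\Sigma,q,\mathcal M,\mathcal R)$ small
\[
I(s,t)\le C\big(\widetilde H(t)-\widetilde H(s)\big)+\big(I(s-1,t+1)-I(s,t)\big)+C\big(\delta_-^2e^{-2\eps s}+\delta_+^2e^{2\eps t}\big),
\]
the $C\eta\,I(s-1,t+1)$ term generated by $\mathcal R$ being split as $C\eta\,I(s,t)$ (absorbed on the left) plus $C\eta$ times the increment $I(s-1,t+1)-I(s,t)$. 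In particular $\widetilde H$ is nondecreasing up to a summable error and is bounded by $C\eta^2$ (as $|u|_3^*\le\eta$), so the same computation on all of $[S,T]$ gives the a priori bound $I(S,T)\le C\eta^2+C(\delta_-^2e^{-2\eps S}+\delta_+^2e^{2\eps T})\le 1$. It remains to control the energy defect $\widetilde H(t)-\widetilde H(s)$, which is the crux: following the mechanism of \cite{Sim96} one uses the slice-wise Łojasiewicz inequality at a suitable pair of slices $s_*\in[s-1,s]$, $t_*\in[t,t+1]$ (chosen by the mean value theorem so that $\|\dot u\|^2+\|\nabla\dot u\|^2+\|\ddot u\|^2+\|f\|^2$ there does not exceed its average over the adjacent unit interval, and $u$ is uniquely decomposable there) together with the near-monotonicity of $\widetilde H$ to bound $\big(\widetilde H(t)-\widetilde H(s)\big)^{2-\alpha}$, hence $I(s,t)^{2-\alpha}$, by $C\big(I(s-1,t+1)-I(s,t)\big)$ plus the exponential errors. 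Inserting this, raising the previous display to the power $2-\alpha\ge1$, and using that the increment and the exponential terms are bounded by $C\eta^2$ (so the first power dominates the higher one), gives \eqref{E:thm-decay-main-1}. The only place single-valued functions enter is inside Proposition \ref{prop:ls}, via the at-most-$N(\Sigma,q)$ covers of $\Sigma$ (Lemma \ref{lemma:covers}); the slice-wise unique decomposability is precisely what lets it be applied along the cylinder.

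\emph{The Dini conclusion.} This is then formal. For the first inequality in \eqref{E:thm-decay-main-2}: for a.e.\ $x$ the segment $\{x\}\times[s,t]$ avoids the (codimension $>1$) coincidence set, so the sheets can be consistently labelled along it and $\mathcal G(u(s)(x),u(t)(x))\le\int_s^t|\dot u(\tau)(x)|\,d\tau$; Minkowski's integral inequality then gives $\|\mathcal G(u(s),u(t))\|_{L^2(\Sigma)}\le\int_s^t\|\dot u(\tau)\|_{L^2(\Sigma)}\,d\tau\le\int_S^T\|\dot u\|_{L^2(\Sigma)}$, and one raises to the power $2/\alpha$. For \eqref{E:thm-decay-main-3}: setting $a_k:=I(S+k,T-k)$, \eqref{E:thm-decay-main-1} at $s=S+k$, $t=T-k$ gives the recursion $a_{k+1}^{2-\alpha}\le C(a_k-a_{k+1})+Ce_k$ with $\{e_k\}$ geometrically decaying; a standard case analysis (large versus small consecutive jumps $a_k-a_{k+1}$, using concavity of $x\mapsto x^{\alpha/2}$) bounds $\sum_k(a_k-a_{k+1})^{1/2}$ by $C\,a_0^{\alpha/2}+C\big(\sum_ke_k\big)^{1/2}$; since the unit intervals $[S+k,S+k+1]$ and $[T-k-1,T-k]$ tile $[S,T]$ with $a_k-a_{k+1}$ the $\dot u$-energy on the $k$-th pair, Cauchy--Schwarz gives $\int_S^T\|\dot u\|_{L^2(\Sigma)}\le\sqrt2\sum_k(a_k-a_{k+1})^{1/2}$, and combining yields \eqref{E:thm-decay-main-3}. (Here $a_0=I(S,T)\le1$ is exactly the a priori bound above, which lets the exponents close.)

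\emph{Main obstacle.} The essential difficulty is the second-order-in-$t$ nature of \eqref{E:PDE}: unlike a gradient flow, one cannot close the Łojasiewicz inequality pointwise in $t$, since $\mathcal M(u)$ reconstructed from the equation carries a $\ddot u$ term (as does $\mathcal R$) that is controlled only in an integrated sense and only after a unit enlargement of the interval --- this is the source of the term $I(s-1,t+1)$ in \eqref{E:thm-decay-main-1}. Keeping this enlargement a fixed unit amount (rather than compounding through the bookkeeping), while running everything symmetrically in $t$ so as to obtain uniqueness of the tangent cone at both $0$ and $\infty$ together with the a priori Dini bound, is the technical heart; the multi-valuedness itself is comparatively painless, being fully absorbed into Proposition \ref{prop:ls}.
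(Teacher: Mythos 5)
Your proposal is correct and follows essentially the same route as the paper: local decomposition into single-valued sheets, the energy identity obtained by testing with $\dot u$, interior estimates controlling $\nabla\dot u,\ddot u$ by $\dot u$ on unit-enlarged intervals, the slice-wise multi-valued Łojasiewicz--Simon inequality (Proposition \ref{prop:ls}) at good mean-value times where $u(t)$ is uniquely decomposable, and the Simon-style one-dimensional summation for the Dini bound, with \eqref{E:thm-decay-main-2} via FTC plus Minkowski. The differences are purely organizational (bookkeeping through $\widetilde H$ and a discrete recursion in $a_k=I(S+k,T-k)$, rather than the paper's direct integration between good slices and the continuous recursion after centering $S=-T$) and do not change the substance.
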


\begin{remark}
For simplicity we write and prove Theorem \ref{thm:decay-main} for $u$ taking (multi-)values in $\R$, however the proof carries over verbatim to $u$ which take values in a general vector bundle over $\Sigma$.
\end{remark}

\begin{proof}
    Fix $\sigma>0$. From the proceeding discussion, provided $\eta = \eta(\mathcal{M},\mathcal{R})>0$ is sufficiently small, there is an $r_0 = r_0(\Sigma)>0$ so that on any domain of the form $B_{r_0}(x)\times [t-\sigma,t+\sigma]$, with $(x,t)\in \Sigma\times [S+\sigma,T-\sigma]$, we can decompose $u = \sum_{i=1}^q\llbracket u_i\rrbracket$ into single-valued $C^3$ functions and $f = \sum^q_{i=1}\llbracket f_i\rrbracket$ into single-valued $C^1$ functions, which solve
    \begin{equation}\label{E:decay-1}
        \ddot{u}_i + \dot{u}_i + \mathcal{M}(u_i) + \mathcal{R}(u_i) = f_i
    \end{equation}
    for each $i=1,\dotsc,q$. First observe that by differentiating \eqref{E:decay-1} in $t$, ensuring that $\eta = \eta(\mathcal{M},\mathcal{R})>0$ is sufficiently small, we get that every $v_i := \dot{u}_i$ satisfies a PDE of the form
    $$\ddot{v}_i + \dot{v}_i + L\dot{v}_i + e_{i,1}\nabla^2 v_i + e_{i,2}\ddot{v}_i + e_{i,3}\nabla \dot{v}_i + e_{i,4}\nabla v_i + e_{i,5}\dot{v}_i + e_{i,6}v_i = g_i,$$
    where $e_{i,j} = e_{i,j}(x,t)$ and $g_i = g_i(x,t)$ are $C^0$ functions on $B_{r_0}(x)\times [t-\sigma,t+\sigma]$ satisfying $|e_{i,j}|\leq\delta$ and $|g_i|\leq \delta_-e^{-\eps t} + \delta_+ e^{\eps t}\leq 2\eta$. Therefore, provided $\eta = \eta(\mathcal{M},\mathcal{R},\sigma)>0$ is sufficiently small, we can use standard (interior) $L^p$ estimates to get
    \begin{equation}\label{E:decay-2}
        \int^{t+\sigma/2}_{t-\sigma/2}\int_\Sigma |\dot{v}|^2 + |\nabla v|^2 \leq C\int^{t+\sigma}_{t-\sigma}\int_\Sigma |v|^2 \qquad \forall t\in [S+\sigma,T-\sigma],
    \end{equation}
    where $C = C(\Sigma,q,\sigma)>0$. In particular, there is a set of times $G_t\subset [t-\sigma/2,t+\sigma/2]$ of measure $|G|\geq \frac{99\sigma}{100}$ so that for every $t^\prime\in G_t$ we have
    \begin{equation}\label{E:decay-3}
    \int_\Sigma |\dot{v}(t^\prime)|^2 + |\nabla v(t^\prime)|^2 + |v(t^\prime)|^2 \leq C\int^{t+\sigma}_{t-\sigma}\int_\Sigma|v|^2
    \end{equation}
where again $C = C(\Sigma,q,\sigma)>0$.
    
Next, observe that if we combine \eqref{E:decay-3} with our Łojasiewicz--Simon inequality (Proposition \ref{prop:ls}), using the fact from \eqref{E:ud-slices}, we have for a.e. $t^\prime \in G_t$ (i.e. those $t^\prime$ for which the uniquely decomposable hypothesis of Proposition \ref{prop:ls} holds):
\begin{align}
    \nonumber |\mathcal{E}_q(u(t^\prime)) - \mathcal{E}_q(0)|^{2-\alpha} & \leq \int_\Sigma \sum_i |\mathcal{M}(u_i(t^\prime))|^2\ dx\\
    \nonumber & \leq C\int_\Sigma |\ddot{u}(t^\prime)|^2 + |\dot{u}(t^\prime)|^2 + |\mathcal{R}(u(t^\prime))|^2 + |f(t^\prime)|^2\ dx\\
    \nonumber & \leq C\int_\Sigma |\ddot{u}(t^\prime)|^2 + |\dot{u}(t^\prime)|^2 + |\nabla\dot{u}(t^\prime)|^2\ dx + C\left(\delta_-^2e^{-2\eps t^\prime} + \delta_+^2 e^{2\eps t^\prime}\right)\\
    & \leq C\int^{t+\sigma}_{t-\sigma}\int_\Sigma |\dot{u}|^2\ dx\, dt + C\left(\delta_-^2e^{-2\eps t^\prime} + \delta_+^2 e^{2\eps t^\prime}\right)\label{E:decay-4}
\end{align}
where again $C = C(\Sigma,q,\sigma)>0$ is a constant which can change from line-to-line; here, $\alpha = \alpha(\Sigma,q)\in (0,1]$ is the constant from Proposition \ref{prop:ls}.

Now take $s<t$ with $s,t\in [S+\sigma,T-\sigma]$ and fix $s^\prime<s$ with $s^\prime \in G_s$ and $t^\prime>t$ with $t\in G_t$. Multiply \eqref{E:decay-1} by $\dot{u}_i$, then sum over $i$, and then integrate over $[s^\prime,t^\prime]$ to obtain
\begin{equation}\label{E:decay-5}
    \int^{t^\prime}_{s^\prime}\int_\Sigma\frac{d}{dt}\left(\frac{1}{2}|\dot{u}|^2\right) + |\dot{u}|^2 + \sum_i \mathcal{M}(u_i)\dot{u}_i + \sum_i \mathcal{R}(u_i)\dot{u}_i\ dt = \int^{t^\prime}_{s^\prime}\int_\Sigma \sum_i f_i \dot{u}_i\ dt.
\end{equation}
Noting that
\begin{equation*}
    \frac{d}{dt}\mathcal{E}_q(u(t)) = -\int_\Sigma\sum_i\mathcal{M}(u_i)\dot{u}_i\ dx
\end{equation*}
we can then compute using \eqref{E:decay-5}, \eqref{E:decay-3}, \eqref{E:decay-2}, recalling that $|s-s^\prime|\leq\sigma/2$, $|t-t^\prime|\leq\sigma/2$, assuming $\eta = \eta(\mathcal{M},\mathcal{R},\sigma)>0$ is sufficiently small,
\begin{align*}
    I(s,t) & \leq I(s^\prime,t^\prime)\\
    & = \int^{t^\prime}_{s^\prime}\int_\Sigma \sum_i f_i \dot{u}_i +\int^{t^\prime}_{s^\prime}\frac{d}{dt}\mathcal{E}_q(u(t)) - \int^{t^\prime}_{s^\prime}\int_\Sigma\frac{d}{dt}\left(\frac{|\dot{u}|^2}{2}\right) - \sum_i\mathcal{R}(u_i)\dot{u}_i\ dt\\
    & \leq \mathcal{E}_q(u(t^\prime)) - \mathcal{E}_q(u(s^\prime)) + \frac{1}{2}\int_\Sigma |\dot{u}(s^\prime)|^2 - \frac{1}{2}\int_\Sigma |\dot{u}(t^\prime)|^2 + \int^{t^\prime}_{s^\prime}\left(f^2 + q^2|\mathcal{R}(u)||\dot{u}|\right) dt + \frac{1}{4}I(s^\prime,t^\prime)\\
    & \leq |\mathcal{E}_q(u(t^\prime))-\mathcal{E}_q(0)| + |\mathcal{E}_q(u(s^\prime))-\mathcal{E}_q(0)| + C\int^{s+\sigma}_{s-\sigma}\int_\Sigma |\dot{u}|^2 + C\int^{t+\sigma}_{t-\sigma}\int_\Sigma|\dot{u}|^2 + \frac{1}{4}I(s^\prime,t^\prime)\\
    & \hspace{2em} + C\cdot \Psi(\eta)\int^{t^\prime}_{s^\prime}\int_\Sigma \left(|\dot{u}|^2 + |\ddot{u}|^2 + |\nabla\dot{u}|^2\right)dt + C\delta_-^2 e^{-2\eps s^\prime} + C\delta_+^2 e^{2\eps t^\prime}\\
    & \leq |\mathcal{E}_q(u(t^\prime))-\mathcal{E}_q(0)| + |\mathcal{E}_q(u(s^\prime))-\mathcal{E}_q(0)| + C\int^{s+\sigma}_{s-\sigma}\int_\Sigma |\dot{u}|^2 + C\int^{t+\sigma}_{t-\sigma}\int_\Sigma|\dot{u}|^2 + \frac{1}{4}I(s^\prime,t^\prime)\\
    & \hspace{2em} + C\cdot \Psi(\eta)\int^{t+\sigma}_{s-\sigma}\int_\Sigma |\dot{u}|^2 dt + C\delta_-^2e^{-2\eps s^\prime} + C\delta_+^2 e^{2\eps t^\prime},
\end{align*}
where here $C = C(\Sigma,q,\sigma, \eps)>0$. Now assuming also $\eta = \eta(\Sigma,q,\mathcal{M},\mathcal{R},\sigma)$ is sufficiently small, splitting the first integral on the (very) last line above as $\int^{t+\sigma}_{s-\sigma} = \int^{t+\sigma}_{t-\sigma} + \int^{t-\sigma}_{s+\sigma} + \int^{s+\sigma}_{s-\sigma}$, we get (continuing the chain of inequalities)
\begin{align*}
    &\leq |\mathcal{E}_q(u(t^\prime))-\mathcal{E}_q(0)| + |\mathcal{E}_q(u(s^\prime))-\mathcal{E}_q(0)| + C\int^{s+\sigma}_{s-\sigma}\int_\Sigma |\dot{u}|^2 + C\int^{t+\sigma}_{t-\sigma}\int_\Sigma|\dot{u}|^2 + \frac{1}{2}I(s,t)\\
    & \hspace{2em} + C\delta_-^2 e^{-2\eps s^\prime} + C\delta_+^2 e^{2\eps t^\prime},
\end{align*}
where we have absorbed the extra parts of $I(s^\prime,t^\prime)$ when compared to $I(s,t)$ into the other terms. Now using \eqref{E:decay-4}, recalling that $\alpha = \alpha(\Sigma,q)\in (0,1]$ (continuing again the chain of inequalities)
\begin{align*}
    &\leq \frac{1}{2}I(s,t) + C\int^{s+\sigma}_{s-\sigma}\int_\Sigma |\dot{u}|^2 + C\int^{t+\sigma}_{t-\sigma}\int_\Sigma|\dot{u}|^2 + C\delta_-^2e^{-2\eps s^\prime} + C\delta_+^2e^{2\eps t^\prime}\\
    & \hspace{2em} + C\left(\int^{s+\sigma}_{s-\sigma}\int_\Sigma |\dot{u}|^2\right)^{\frac{1}{2-\alpha}} + C\left(\int^{t+\sigma}_{t-\sigma}\int_\Sigma |\dot{u}|^2\right)^{\frac{1}{2-\alpha}} + C\left(\delta_-^2e^{-2\eps s^\prime}\right)^{\frac{1}{2-\alpha}} + C\left(\delta_+^2e^{2\eps t^\prime}\right)^{\frac{1}{2-\alpha}}\\
    & \leq \frac{1}{2}I(s,t) + C\left(\int^{s+\sigma}_{s-\sigma}\int_\Sigma |\dot{u}|^2\right)^{\frac{1}{2-\alpha}} + C\left(\int^{t+\sigma}_{t-\sigma}\int_\Sigma |\dot{u}|^2\right)^{\frac{1}{2-\alpha}}\\
    & \hspace{20em} + C\left(\delta_-^2e^{-2\eps s}\right)^{\frac{1}{2-\alpha}} + C\left(\delta_+^2 e^{2\eps t}\right)^{\frac{1}{2-\alpha}}
\end{align*}
as all quantities are $\leq 1$ (we remark that in the ``integrable'' case, i.e. when $\alpha = 1$, the situation is simpler as the various terms have identical powers). Here, $C = C(\Sigma,q,\sigma, \eps)$. Therefore, rearranging we obtain
$$I(s,t)^{2-\alpha} \leq C\left[I(s-\sigma,t+\sigma) - I(s+\sigma,t-\sigma) + \delta_-^2e^{-2\eps s} + \delta_+^2e^{2\eps t}\right].$$
If we now take $\sigma = 1/2$, and replace $s$ by $s-1/2$ and $t$ by $t+1/2$, this gives
$$I(s-1/2,s+1/2)^{2-\alpha} \leq C\left[I(s-1,t+1)-I(s,t) + \delta_-^2e^{-2\eps s} + \delta_+^2e^{2\eps t}\right]$$
where now $C = C(\Sigma,q, \eps)$. Noting that $I(s,t)\leq I(s-1/2,t+1/2)$ then gives \eqref{E:thm-decay-main-1}. 

Now we turn to proving \eqref{E:thm-decay-main-3}. First note that by replacing $t$ with $t-(T+S)/2$ and  $\delta_\pm$ with $\delta_\pm e^{\mp\eps(T+S)/2}$, there is no loss in generality in assuming that $S=-T$. 
Of course, if $T\leq 10$, then \eqref{E:thm-decay-main-3} follows trivially by Hölder's inequality, and so without loss of generality we can assume $T\geq 10$.

Let us write
$$I(t):= I(-t,t) + (\delta_-^2 + \delta_+^2) e^{2\eps t}.$$
Then \eqref{E:thm-decay-main-1} implies (taking $s=-t$)
\begin{equation}\label{E:decay-6}
    I(t)^{2-\alpha} \leq C\left[I(t+1)-I(t)\right] \qquad \text{for all }t\in (0,T-1].
\end{equation}
where $C = C(\Sigma,q, \eps)$.  We split into two cases, depending on whether $\alpha = 1$ or $\alpha\in (0,1)$.

If $\alpha\in (0,1)$, then standard 1-dimensional calculus (as in \cite[Section 3.15(9)]{Sim96} implies
$$I(t)^{\alpha-1}-I(t+1)^{\alpha-1} \geq C_0$$
for some $C_0 = C_0(\Sigma,q, \eps)$. We deduce that, for $t=T-k$, $k\in \N$, we have
$$I(t)\leq \left[I(T)^{\alpha-1}+C_0(T-t)\right]^{\frac{1}{\alpha-1}}.$$
Now define $t_0 = T$, $t_i = T-e^i$, for $i\in \Z_{\geq 1}$ such that $t_i>0$. Let $I$ be the largest integer for which $t_I\geq 0$, and set $t_{I+1} = 0$. We then compute
\begin{align*}
    \int^T_{-T}\|\dot{u}(t)\|_{L^2(\Sigma)} dt & \leq \sum^{I+1}_{i=1}\int_{t_i\leq |t|<t_{i-1}}\|\dot{u}(t)\| dt\\
    & \leq 2\sum^{I+1}_{i=1}|t_{i-1}-t_i|^{1/2}I(t_{i-1})^{1/2}\\
    & \leq 2\cdot e^{1/2}I(T)^{1/2} + 2\sum^{I}_{i=1}e^{i/2}\left[I(T)^{\alpha-1}+ C_0e^i\right]^{\frac{1}{2(\alpha-1)}}\\
    & \leq C(\Sigma,q, \eps)I(T)^{\alpha/2}
\end{align*}
where in the last line we have used that $I(T)<1$ and the elementary inequality
$$\sum^\infty_{i=0}e^{i/2}(A+e^i)^{-\beta/2} \leq C(\beta)A^{(1-\beta)/2} \qquad \text{for }A\geq 0 \text{ and }\beta>1.$$
This establishes \eqref{E:thm-decay-main-3} in the case $\alpha\in (0,1)$.

If $\alpha=1$, we can rearrange \eqref{E:decay-6} to obtain
$$I(t) \leq \frac{C}{C+1}I(t+1) =: A\cdot I(t+1)$$
for some constant $A = A(\Sigma,q)\in (0,1)$. By applying a similar computation to the above except with the (decreasing) sequence $\{t_i\}_{i=1}^I = \{i\in \Z_{\geq 0}: i\leq \lfloor T\rfloor\}$, and $t_0 = T$, we deduce 
$$\int^T_{-T}\|\dot{u}(t)\|_{L^2(\Sigma)} dt \leq 2\cdot 2^{1/2}\sum_{i=0}^\infty A^{i/2}I(T)^{1/2} \leq CI(T)^{1/2}$$
where $C = C(\Sigma,q, \eps)$. This establishes \eqref{E:thm-decay-main-3} in the case $\alpha=1$ also.

The final thing left to show is \eqref{E:thm-decay-main-2}: this essentially follows from the fundamental theorem of calculus and the Minkowski inequality. In any $B_{r_0}(x)\times [\tau-1,\tau+1]$, we can decompose $u = \sum_i \llbracket u_i\rrbracket$ into single-valued $C^3$ functions, and so for $U\subset B_{r_0}(x)$ and $s,t\in [\tau-1,\tau+1]$, we have
\begin{align*}
    \left(\int_U\mathcal{G}(u(s),u(t))^2 dx\right)^{1/2} & \leq \left(\int_U\sum_i|u_i(x,t) - u_i(x,s)|^2 dx\right)^{1/2}\\
    & = \left(\int_U\sum_i\left|\int^t_s\dot{u}_i(x,t^\prime) dt^\prime\right|^2 dx\right)^{1/2}\\
    & \leq \sum_i \left(\int_U\left(\int^t_s\dot{u}_i(x,t^\prime) dt^\prime\right)^2 dx\right)^{1/2}\\
    & \equiv \sum_i \left\|\int^t_s\dot{u}_i(\cdot,t^\prime) dt^\prime\right\|_{L^2(U)}\\
    & \leq \sum_i \int^t_s\|\dot{u}_i(\cdot,t^\prime)\|_{L^2(U)} dt^\prime\\
    & = \sum_i\int^t_s\left(\int_U|\dot{u}_i(x,t^\prime)|^2 dx\right)^{1/2} dt^\prime.
\end{align*}
Now if we take general $S\leq s<t\leq T$, we can choose $s=t_0<t_1<\cdots<t_k=t$ with $|t_{k+1}-t_k|<1$. Then we compute using the above
\begin{align*}
    \|\mathcal{G}(u(s),u(t))\|_{L^2(U)} & \leq \sum^k_{j=1}\|\mathcal{G}(u(t_{j-1}),u(t_j))\|_{L^2(\Sigma)}\\
    & \leq \sum_{j=1}^k\sum_{i=1}^q\int^{t_j}_{t_{j-1}}\|\dot{u}_i\|_{L^2(U)}\\
    & = \sum_i\int^t_s\|\dot{u}_i\|_{L^2(U)}
\end{align*}
Now using that $\|\dot{u}_i\|_{L^2(U)}\leq \|\dot{u}\|_{L^2(U)} \leq \|\dot{u}\|_{L^2(\Sigma)}$, and squaring both sides and summing over a suitable finite number (depending only on $\Sigma$) of balls $U = B_{r_0}(x)$ which cover $\Sigma$, and then taking the square root, we get
$$\|\mathcal{G}(u(s),u(t))\|_{L^2(\Sigma)} \leq C(\Sigma,q)\int^t_s\|\dot{u}\|_{L^2(\Sigma)}$$
which gives \eqref{E:thm-decay-main-2} as desired.
\end{proof}

\section{Proof of Theorem \ref{thm:dini} \& \ref{thm:main-1}}\label{sec:proofs-1}

In this section we fix $\BC$ a minimal hypercone in $\R^{n+1}$ with smooth cross-section $\Sigma$. We also fix a choice of unit normal $\nu_{\BC}$ for $\BC$, which induces a choice of unit normal $\nu_\Sigma\subset S^n$.

Let us introduce some notation. Given an $q$-valued $C^k$ function $u$ on $\BC\cap A_{R,\rho}$, write
$$\graph_{\BC}(u)\cap A_{R,\rho}:= \left\{|x|\cdot\frac{x+u_i(x)\nu_{\BC}(x)}{|x+u_i(x)\nu_{\BC}(x)|}: x\in\BC\cap A_{R,\rho}\text{ and }i=1,\dotsc,q\right\}.$$
Given a $q$-valued $C^k$ function on $\Sigma$, we similarly define
$$\graph_\Sigma(u):= \left\{\frac{\theta + u_i(\theta)\nu_\Sigma(\theta)}{|\theta + u_i(\theta)\nu_\Sigma(\theta)|}:\theta\in \Sigma\text{ and }i=1,\dotsc,q\right\}.$$
First, we prove the a priori estimate in Theorem \ref{thm:dini}.

\begin{proof}[Proof of Theorem \ref{thm:dini}]
    It is fairly standard (c.f. \cite{Sim83a, Sim85}) to show that by changing coordinates via $t = n\log(r)$ and $v(\theta,t) = r^{-1}u(r\theta)$, where $r,\theta$ are polar coordinates on $\R^{n+1}$, that $v$ satisfies a PDE of the form \eqref{E:PDE} considered in Section \ref{sec:estimates}. For the reader's convenience and to highlight the differences in the multi-valued setting, we summarise the ideas here.

    Let us first suppose $q=1$. Given $u\in C^1(\Sigma)$, we define
    $$\mathcal{E}(u):= \frac{1}{n}\H^{n-1}(\graph_\Sigma(u)) \equiv \frac{1}{n} \int_\Sigma E(\theta,u(\theta),\nabla u(\theta)) d\theta.$$
    This is well-defined for $|u|_{C^1}$ sufficiently small. The function $E$ is analytic in its arguments, satisfies the convexity hypothesis \eqref{E:convexity}, and has the form
\begin{equation}\label{eqn:dini-.1}
E(\theta,z,p)^2 = 1+\frac{1}{2}(|p|^2 - z^2|A_\Sigma(\theta)|^2 - z^2 (n-1)) + z^3 \cdot E_1 + z \cdot p^2 \cdot E_2 + p^3 \cdot E_3, 
\end{equation}
for $E_1, E_2, E_3$ analytic functions of $(\theta, z, p)$.  In both \eqref{eqn:dini-.1}, \eqref{eqn:dini-.2} we are abusing notation slightly: when we write $p^2 \cdot E_2$ we really mean there are functions $E_{2ij}^{\alpha \beta}(\theta, z, p)$ defined on $\Sigma \times \R^{n+1} \times (\R^{n+1})^2$, so that
\[
p^2 \cdot E_2 \equiv \sum_{i, j,  \alpha, \beta} p_i^\alpha p_j^\beta E_{2ij}^{\alpha \beta}.
\]
    
    If $M\cap A_{R,\rho} = \graph_{\BC}(u)\cap A_{R,\rho}$ for $u\in C^3(\BC\cap A_{R,\rho})$, then by setting $t = n\log(r)$ and $v(\theta,t) = r^{-1}u(r\theta)$, $v$ becomes a function on $\Sigma\times [n \log(\rho), n \log(R)] =: \Sigma\times [S,T]$, and we can write
    $$\H^n(M\cap A_{R,\rho}) = \int^T_S\int_\Sigma e^t\left[F(\theta,v,\nabla v,\dot{v}) + H(\theta,g(\theta,t,v),v,\nabla v,\dot{v})\right]\, d\theta dt.$$
    Here $F,H$ are analytic functions of their arguments, with $F(\theta,v,\nabla v,0) \equiv E(\theta,v,\nabla v)$, $H(\theta,0,v,\nabla v,\dot{v})\equiv 0$, and with the additional structure 
    \begin{equation}\label{eqn:dini-.2}
    F(\theta,z,p,q)^2 = [1+|q|^2 +q^2\cdot z^2\cdot F_1(z)]\cdot E(\theta,z,p)^2 + q^2\cdot p^2\cdot F_2(\theta,z,p).\end{equation}

    A direct computation shows that, provided $|v|^*_3$ is sufficiently small, then $M$ is stationary in $A_{R,\rho}$ if and only if $v$ solves a PDE of the form
    \begin{equation}\label{E:PDE-11}
        \ddot{v}+\dot{v}+\mathcal{M}(v) + \mathcal{R}(v) = f
    \end{equation}
    where $\mathcal{M}$ is the negative $L^2(\Sigma)$-gradient of $\mathcal{E}$ (as in Section \ref{sec:prelim}), $\mathcal{R}$ has the form \eqref{E:R}, and $f$ is an analytic function of $(\theta,g(\theta,t,v), Dg(\theta,t,v), v,\nabla v,\dot{v},\ddot{v},\nabla\dot{v},\nabla^2 v)$, which satisfies
    $$f(g=0, Dg=0) \equiv 0.$$
    In particular, from \eqref{E:Dini-1} $f$ is a $C^1$ function in $(\theta,t)$, satisfying
    \begin{equation}\label{E:f-bound-2}
        |f|_1^*\leq c(\Sigma)\delta_- e^{-\beta t} + c(\Sigma)\delta_+ e^{\beta t}
    \end{equation}
    which is exactly the set-up considered in Section \ref{sec:estimates}.

    If instead $M\cap A_{R,\rho} = \graph(\BC)(u)\cap A_{R,\rho}$ for $u\in C^3_q(\BC\cap A_{R,\rho})$ (so, $u$ is a non-branched $q$-valued function), then by effecting the same change of coordinates and setting $v = r^{-1}u(r\theta)$, we obtain $v\in C^3_q(\Sigma\times [S,T])$, and now
    $$\H^n(M\cap A_{R,\rho}) = \int^T_S\int_\Sigma \sum^q_{i=1}e^t[F(\theta,v_i,\nabla v_i,\dot{v}_i) + H(\theta,g(\theta,t,v_i),v_i,\nabla v_i,\dot{v}_i)]\, d\theta dt,$$
    which is well-defined due to the definition of $C^3_q$ and \eqref{E:ud-slices}. It then follows that $M$ is stationary in $A_{R,\rho}$ as an immersion if and only if $v$ solves the PDE \eqref{E:PDE-11} in the sense of Sections \ref{sec:prelim} and \ref{sec:estimates}, with $f\in C^1_q(\Sigma\times [S,T])$ satisfying the same estimate \eqref{E:f-bound-2}, except with $C = C(\Sigma,q)$.

    In particular, we can apply Theorem \ref{thm:decay-main} to deduce the estimate
    \begin{equation}\label{E:Dini-5}
        \left(\int^R_\rho\left\|r\frac{\del}{\del r}\left(\frac{u(r\cdot)}{r}\right)\right\|_{L^2(\Sigma)} \frac{dr}{r}\right)^{2/\alpha} \leq C\int^R_\rho \left\|r\frac{\del}{\del r}\left(\frac{u(r\cdot)}{r}\right)\right\|_{L^2(\Sigma)}^2 \frac{dr}{r} + \frac{\delta_-^2\rho^{-2\beta}}{\beta e^\beta} + \frac{\delta_+^2 R^{2\beta}}{\beta e^\beta}
\end{equation}
    where $\alpha = \alpha(\Sigma,q)\in (0,1]$ and $C = C(\Sigma,q)>0$. The claim \eqref{E:Dini-3} then follows from \eqref{E:Dini-5} and the Hardt--Simon-type inequality (c.f. \cite[Page 561]{Sim83a}): if
    $$X = r\frac{\theta+r^{-1}u_i(r\theta)\nu_{\BC}(\theta)}{|\theta+r^{-1}u(r\theta)\nu(\theta)|}$$
    then provided $\eps = \eps(\BC)\in (0,1)$ is sufficiently small, we have
    $$\frac{1}{2}\left|r\frac{\del}{\del r}\left(\frac{u_i(r\theta)}{r}\right)\right|^2 \leq \frac{|X^{T_X M}|^2}{|X|^2}\leq 2\left|r\frac{\del}{\del r}\left(\frac{u_i(r\theta)}{r}\right)\right|^2.$$
    Moreover, \eqref{E:Dini-2} follows from \eqref{E:thm-decay-main-2}. Finally, \eqref{E:Dini-4} follows from \eqref{E:Dini-3} and the monotonicity formula. This completes the proof.
\end{proof}
We now obtain Theorem \ref{thm:main-1} by combining Theorem \ref{thm:dini} with the sheeting theorem of Bellettini \cite[Theorem 6]{Bel23}.

\begin{proof}[Proof of Theorem \ref{thm:main-1}]
    We first claim the following: given $\eps>0$ and $q\in \Z_{\geq 1}$, there is a $\delta = \delta(\BC,q,\eps)>0$ such that if $M$ is a stable immersed minimal hypersurface in $A_{8,1/16}$ with $\mathcal{H}^{n-2}(\sing(M))=0$ satisfying
    \begin{equation}\label{E:thm-1-1}
        M\cap A_{1,1/2} = \graph_{\BC}(u), \qquad |u|_{C^2}\leq \delta
    \end{equation}
    for some $u\in C^2_q(\BC\cap A_{1,1/2})$, and
    \begin{equation}\label{E:thm-1-2}
        \H^n(M\cap B_{8,1/16})\leq (q+1/2)\H^n(\BC\cap A_{8,1/16}), \qquad \int_{M\cap A_{8,1/16}}\frac{|\pi^\perp(x)|^2}{|x|^{n+2}} d\H^n(x)\leq \delta,
    \end{equation}
    then $u$ can be extended to a $C^2$ $q$-valued function on $\BC\cap A_{4,1/4}$, with
    \begin{equation}\label{E:thm-1-3}
        M\cap A_{4,1/8} = \graph_{\BC}(u)\cap A_{4,1/8}, \qquad |u|_{C^3}\leq \eps.
    \end{equation}
    To see this, we argue by contradiction. So suppose that there are sequences $\delta_i\to 0$ and $M_i$ immersed stable minimal hypersurfaces in $A_{8,1/16}$ satisfying \eqref{E:thm-1-1}, \eqref{E:thm-1-2} with $\delta_i$, $M_i$ in place of $\delta$, $M$, but for which \eqref{E:thm-1-3} fails. After passing to a subsequence, \eqref{E:thm-1-2} implies that we can take a limit $M_i\weakly V$ as varifolds to obtain some stationary, dilation-invariant, integral varifold $V$ in $A_{6,1/12}$. Then \eqref{E:thm-1-1} implies that $V\res A_{1,1/2} = q|\BC|\res A_{1,1/2}$, and so $V = q|\BC|$ on all of $A_{6,1/12}$. The Bellettini's $\eps$-regularity theorem \cite[Theorem 6]{Bel23} then implies that $M_i\to q|\BC|$ as $C^2$ $q$-valued graphs on compact subsets of $A_{5,1/10}$, which implies that \eqref{E:thm-1-3} holds for all $i$ sufficiently large, providing the desired contradiction. This proves the claim.

    Let us now prove the first case of the theorem. So suppose $M$ is a stable immersed minimal hypersurface in $B_1\subset \R^{n+1}$ and that there is some sequence $r_i\to 0$ so that the rescaled surfaces $r_i^{-1}M$ obey $r_i^{-1}M\weakly q|\BC|$ as varifolds for some regular cone $\BC$. Let $\alpha = \alpha(\BC,q)\in (0,1]$, $\eps = \eps(\BC,q)\in (0,1)$ be as in Theorem \ref{thm:dini}. Also assume that $\delta = \delta(\BC,q,\eps)\in (0,1)$ is sufficiently small to apply the above claim. Fix $i$ sufficiently large so that (by the monotonicity formula)
    \begin{equation}\label{E:thm-1-4}
        \Theta_M(r_i) - \Theta_M(0) \leq \delta^{4/\alpha},
    \end{equation}
    and (by \cite[Theorem 6]{Bel23} and subsequent higher order interior estimates which follow from compactness of $\Sigma$)
    \begin{equation}\label{E:thm-1-5}
        r^{-1}_i M\cap A_{1,1/64} = \graph_{\BC}(u) \qquad \text{for some }u\in C^3_q(\BC\cap A_{1,1/64}) \text{ obeying }|u|_{C^3}<\delta^2.
    \end{equation}
    Now let $\rho^*$ be the infimum over radii $\rho$ for which
    \begin{equation}\label{E:thm-1-6}
        r_i^{-1}M\cap A_{1,\rho} = \graph_{\BC}(u), \qquad \sum^3_{k=0}|x|^{k-1}|\nabla^k u| \leq \eps
    \end{equation}
    for some $u\in C^3_q(\BC\cap A_{1,\rho})$. We claim that $\rho^*=0$. If not, i.e. $\rho^*>0$, then \eqref{E:thm-1-6} fails at $\rho = \rho^*$ and is true for $\rho>\rho^*$. Note first that $r_i^{-1}M$ is still graphical over $A_{1,\rho^*/2}$ due to the estimate in \eqref{E:thm-1-6} holding for $\rho>\rho^*$ (which therefore allows for extension), and so it must be the estimate in \eqref{E:thm-1-6} which fails. But then, combining \eqref{E:Dini-2}, \eqref{E:Dini-4}, with \eqref{E:thm-1-4}  we get
    $$\sup_{r,s\in (\rho^*/2,1)}\|\mathcal{G}(r^{-1}u(r),s^{-1}u(s))\| \leq C\delta^2$$
    and then combining this with \eqref{E:thm-1-5} and the triangle inequality we get
    $$\sup_{r\in (\rho^*/2,1)}\|r^{-1}u(r\theta)\|_{L^2(\Sigma)} \leq (1 + C)\delta^2$$
    where $C = C(\BC,q)$ is an absolute constant independent of $\rho^*$. In particular, applying the standard interior estimate (c.f. the proof of Theorem \ref{thm:decay-main}) we have
    \begin{equation}\label{E:thm-1-7}
    \sum^3_{k=0}|x|^{k-1}|\nabla^k u|\leq C\delta^2 \qquad \text{on }\BC\cap A_{1,\rho^*}
    \end{equation}
    where $C = C(\BC,q)$. So, provided $\delta = \delta(\BC,q,\eps)$ is sufficiently small, we can use \eqref{E:thm-1-7} with \eqref{E:thm-1-4} and our initial Claim to arrive at a contradiction. Hence, $\rho^* = 0$.

    Since $\rho^*=0$, we can use \eqref{E:Dini-4} to deduce that $r^{-1}u(r\cdot)$ converges in $L^2(\Sigma)$ as $r\to 0$ to some unique limit $\phi = \phi(\theta)$. By elliptic estimates, this convergence is in fact in $C^2(\Sigma)$. Our hypothesis that $r^{-1}_iM\weakly q|\BC|$ then implies that $\phi = 0$, and hence $r^{-1}M\to q|\BC|$ as $r\to 0$, giving uniqueness of the tangent cone. To get the rate of decay, we can apply Theorem \ref{thm:decay-main} (i.e. \eqref{E:thm-decay-main-1}) with $s=-\infty$ to deduce (in this situation $\delta_- = \delta_+ = 0$)
    \begin{equation}\label{E:thm-1-8}
    I(\rho)^{2-\alpha} \leq C(I(2\rho) - I(\rho)) \qquad \text{for all }\rho\in (0,1/2)
    \end{equation}
    where $I(\rho) := \int_{M\cap B_\rho}|x|^{-n-2}|\pi^\perp_M(x)|^2 dx$ and $C = C(\BC,q)$. The argument for the decay rate now proceeds as in \cite[Section 3.15]{Sim96}, completing the proof of Theorem \ref{thm:main-1} in this case.

    Now let us turn to the second situation in Theorem \ref{thm:main-1}, namely when $M\subset \R^{n+1}\setminus B_1$ and $r_i\to \infty$. We can apply the same reason as above, except with the monotonicity of \cite{MN22}: without loss of generality, assuming that $M\cap \del B_2$ is an immersed surface, if one defines
    $$\widetilde{\Theta}_M(r):= \frac{\H^n(M\cap A_{r,2})}{\omega_n r^n} + \frac{2}{n\omega_n r^n}\int_{M\cap \del B_2}\frac{|\pi_M(x)|}{|x|}\, d\H^{n-1}$$
    then (the computations of) \cite[Theorem 2.7]{MN22} gives the monotonicity formula
    $$\widetilde{\Theta}_M(r)-\widetilde{\Theta}_M(s)\geq\frac{1}{\omega_n}\int_{M\cap A_{r,s}}\frac{|x^\perp|^2}{|x|^{n+2}}\, d\H^n \qquad \text{for all }2<s<r<\infty.$$
    The argument then proceeds in the same manner as the above, with $\widetilde{\Theta}_M$ in place of $\Theta_M$, and instead of \eqref{E:thm-1-8}, in the application of \ref{E:thm-decay-main-1} we instead take $t=\infty$ and get
    $$J(\rho)^{2-\alpha}\leq C[J(\rho/2)-J(\rho)] \qquad \text{for all }\rho\in (2,\infty)$$
    where $J(\rho) := \int_{M\setminus B_\rho}|x|^{-n-2}|\pi^\perp_M(x)|^2\, dx$. This completes the proof of Theorem \ref{thm:main-1}.
\end{proof}

\section{Proof of Theorem \ref{thm:main-2}}\label{sec:proofs-2}

\begin{proof}[Proof of Theorem \ref{thm:main-2}]
    Let $\Sigma\hookrightarrow \R^{n+1}$ be the immersion of the link into $\R^{n+1}$ ($\Sigma$ could either connected or disconnected). Since one has the $\eps$-regularity theorem \cite[Theorem D]{MW23}, one can lift the functions $u$ defined on (slices of) regions of $\BC$ to those over $\Sigma$. Consequently, we get a Łojasiewicz--Simon inequality in this setting as an immediate consequence of \cite[Theorem 3]{Sim83a} (without any need to use multi-valued functions). The proof now proceeds in an identical way to \cite{Sim83a}.
\end{proof}

\section{Proof of Theorem \ref{thm:main-3}}\label{sec:proofs-3}

The idea is to apply the general existence methods of \cite{CHS, Sim85} to the stationarity equation lifted to an appropriate finite cover of $\Sigma$, and construct solutions that do not descend to embedded graphs over $\Sigma$. Write $A = A(\Sigma)\subset \Z_{\geq 2}$ for the set of possible finite degrees $\geq 2$ of covering spaces of $\Sigma$; since $\Sigma$ is not simply connected, it follows that $\#A \geq 1$.

Fix a covering space $\pi:\Sigma^\prime\to\Sigma$ of $\Sigma$ of degree $q\in A$ (so $q\geq 2$), and endow $\Sigma'$ with the pullback metric to make $\pi$ a local isometry.  Given $v \in C^1(\Sigma' \times [-T, 0])$, we can define the multi-valued function $u \in C^1_q(\BC \cap A_{1, e^{-T/n}})$ by 
\begin{equation}\label{eqn:ex-proof-1}
u(x = r\theta) = \sum_{\theta' \in \pi^{-1}(\theta)} \llbracket r v(\theta', n \log(r)) \rrbracket.
\end{equation}
We are effectively performing the same change of coordinates $t = n \log(r)$, $v = u(x)/|x|$ as in Section \ref{sec:proofs-1}, except we are also unwinding the multi-valued function $u$ on $\Sigma$ into the single-valued function $v$ on $\Sigma'$ (or vice versa).

By the same computations as in Section \ref{sec:proofs-1}, we have
\begin{gather*}
\H^{n-1}(\graph_\Sigma(u)) = \int_{\Sigma'} E(\theta, v, \nabla v) d\theta, \\
\H^n(\graph_\BC(u) \cap A_{1, e^{-T/n}}) = \int_0^T \int_{\Sigma'} e^t F(\theta, v, \nabla v, \dot v) d\theta' dt,
\end{gather*}
where $E, F$ satisfy the same structural conditions \eqref{eqn:dini-.1}, \eqref{eqn:dini-.2}.  In particular, $\graph_\BC(u)$ is stationary for the area functional $\iff$ $v$ solves a quasi-linear equation of the form
\begin{equation}\label{eqn:ex-proof-2}
\ddot v + \dot v + L' v + a_1' \nabla^2 v + a_2' \nabla \dot v + a_3' \ddot v + a_4'\nabla v + a_5' \dot v + a_6' v = 0,
\end{equation}
where each $a_i'$ is a smooth function of $(\theta, v, \dot v, \nabla v)$ satisfying $a_i'(\theta, 0, 0, 0) = 0$, and $L'$ is an $L^2(\Sigma')$-self-adjoint elliptic linear operator on $\Sigma'$.  In fact $L' |_{\theta} = \frac{1}{n^2} (\Delta_\Sigma + |A_\Sigma|^2 + (n-1) )|_{\pi(\theta)}$.

Following \cite{CHS}, we note that $L'$ has an $L^2(\Sigma')$-orthonormal basis of eigenfunctions $\{ \phi_j \in C^\infty(\Sigma') \}_j$ with corresponding eigenvalues $\lambda_j \to \infty$.  Any solution $w$ to the homogenous problem
\[
\ddot w + \dot w + L' w = 0
\]
can be therefore be expanded as
\[
w = \sum_{\lambda_j < -1/4} \left[ c_j \cos(\mu_j t) + d_j \sin(\mu_j t) \right] e^{-t/2} \phi_j + \sum_{\lambda_j = -1/4} \left[ c_j + d_j t \right] e^{-t/2} \phi_j + \sum_{\lambda_j > -1/4} \left[ c_j e^{\gamma^+_j t} + d_j e^{\gamma^-_j t} \right] \phi_j
\]
where $c_j, d_j$ are constants, $\gamma_j^\pm = -1/2 \pm \sqrt{1/4 + \lambda_j}$, and $\mu_j = \mathrm{Im}(\gamma^+_j)$ .  Since $\lambda_1 < 0$ (plug in $1$ into the Raleigh quotient and use the explicit form of $L'$ above), $\gamma^\pm_1 < 0$.

Fix a number $m \in (0, \infty)$ so that $m \neq \gamma^\pm_j$ for any $j$, then fix $J \in \N$ so that $\gamma^+_J < m < \gamma^+_{J+1}$.  Define the projection operator $\Pi_J : L^2(\Sigma') \to L^2(\Sigma')$ by
\[
\Pi_J(f) = f - \sum_{j=1}^J \langle f, \phi_j\rangle_{L^2(\Sigma')} \phi_j, 
\]
and note $\Pi_J$ extends to a linear operator $C^{2, \alpha}(\Sigma') \to C^{2,\alpha}(\Sigma')$ for any fixed $\alpha \in (0, 1)$.  Finally, for $v \in  \Sigma' \times (-\infty, 0)$, define the norm
\[
|v|_B = \sup_{t \in (-\infty, 0)} e^{-mt} |v(t)|_2^*.
\]
Then \cite{CHS, Sim85} prove the following existence theorem:
\begin{theorem}\label{thm:chs}
In the notation above, there are $\eps = \eps(\Sigma^\prime,m,\alpha)>0$ and $c = c(\Sigma^\prime,m,\alpha)>0$ so that given any $g \in C^{2,\alpha}(\Sigma')$ with $|g|_{C^{2,\alpha}} \leq \eps$, one can find $v \in C^{2,\alpha}_{\textnormal{loc}}( (-\infty, 0] \times \Sigma')$ solving \eqref{eqn:ex-proof-2} on $(-\infty, 0) \times \Sigma'$, having the boundary data $\Pi_J(v) = \Pi_J(g)$ on $\{0\}\times \Sigma'$, and satisfying the estimates
\[
|v|_B \leq c |g|_{C^{2,\alpha}}, \quad |v - H_J(g)|_B \leq c |g|_{C^{2,\alpha}}^2,
\]
where
\[
H_J(g) = \sum_{j \geq J+1} \langle g, \phi_j\rangle_{L^2(\Sigma')} e^{\gamma_j^+ t} \phi_j.
\]
\end{theorem}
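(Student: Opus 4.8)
The plan is to follow the Caffarelli--Hardt--Simon construction \cite{CHS, Sim85}: first build a bounded right-inverse for the linearization $\mathcal{L}w := \ddot w + \dot w + L'w$ acting on functions on the half-cylinder $(-\infty,0]\times\Sigma'$ which decay in the weighted norm $|\cdot|_B$ and satisfy the prescribed $\Pi_J$-data at $t=0$, and then solve the full quasilinear equation \eqref{eqn:ex-proof-2} by a contraction mapping argument, treating the error $a_1'\nabla^2 v + \dots + a_6' v$ as a quadratically small perturbation. For the linear step I would expand in the $L^2(\Sigma')$-eigenbasis $\{\phi_j\}$: writing $w = \sum_j w_j(t)\phi_j$ and $f = \sum_j f_j(t)\phi_j$, the equation $\mathcal{L}w = f$ decouples into the scalar ODEs $\ddot w_j + \dot w_j + \lambda_j w_j = f_j$, whose homogeneous solutions are the exponentials $e^{\gamma_j^\pm t}$ (with the oscillatory, resp. polynomial-in-$t$, modifications when $\lambda_j < -1/4$, resp. $\lambda_j = -1/4$). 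Given a source with $\|f\|_\ast := \sup_{t\le 0}e^{-m't}|f(t)|_{C^{0,\alpha}(\Sigma')} < \infty$ for a fixed $m'\in(m,2m]$ and data $g\in C^{2,\alpha}(\Sigma')$, I claim there is a unique $w$ solving $\mathcal{L}w = f$ on $(-\infty,0]$ with $|w|_B<\infty$ and $\Pi_J(w)|_{t=0} = \Pi_J(g)$. The mechanism is the dichotomy forced by finiteness of $|\cdot|_B$: for the high modes $j\ge J+1$ (where $\gamma_j^+ > m > 0$) the unique admissible homogeneous solution is $c_j e^{\gamma_j^+ t}\phi_j$, whose coefficient is pinned by the $\Pi_J$-data, whereas for the low modes $j\le J$ (where $\gamma_j^+ < m$ and every homogeneous mode grows relative to $e^{mt}$ as $t\to-\infty$) there is no admissible homogeneous solution, so $w_j$ is forced to equal the particular solution obtained from $f_j$ by Duhamel's formula with integration from $-\infty$. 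Estimating these Duhamel integrals directly, and combining with interior elliptic Schauder estimates on the unit strips $[\tau-1,\tau+1]\times\Sigma'$ rescaled by the exponential weight, yields $|w|_B \le c(\|f\|_\ast + |g|_{C^{2,\alpha}})$; moreover, isolating the homogeneous part of the high modes shows that $w$ agrees with $H_J(g)$ up to an error bounded by $c\|f\|_\ast$ (this is where $m\ne\gamma_j^\pm$ and $\gamma_J^+ < m < \gamma_{J+1}^+$ enter).

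With the linear solvability in hand, set $Q(v) := a_1'\nabla^2 v + a_2'\nabla\dot v + a_3'\ddot v + a_4'\nabla v + a_5'\dot v + a_6' v$, so that \eqref{eqn:ex-proof-2} reads $\mathcal{L}v = -Q(v)$. Since each $a_i'$ is smooth with $a_i'(\theta,0,0,0)=0$, for $|v|_2^*$ small we have $|a_i'(\theta,v,\dot v,\nabla v)| \le C\,|v|_1^*$, hence $|Q(v)(t)|_{C^{0,\alpha}} \le C(|v(t)|_2^*)^2$, i.e. $\|Q(v)\|_\ast \le C|v|_B^2$ when the source weight is taken to be $m' = 2m$, and likewise $\|Q(v)-Q(\tilde v)\|_\ast \le C(|v|_B + |\tilde v|_B)|v-\tilde v|_B$. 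Defining $\mathcal{T}(v)$ to be the solution furnished by the linear step of $\mathcal{L}w = -Q(v)$ with $\Pi_J(w)|_{t=0} = \Pi_J(g)$ (working in the corresponding $C^{2,\alpha}$-weighted space so that the top-order terms of $Q$ are controlled), the linear estimates give $|\mathcal{T}(v)|_B \le c|g|_{C^{2,\alpha}} + c|v|_B^2$ and $|\mathcal{T}(v)-\mathcal{T}(\tilde v)|_B \le c(|v|_B+|\tilde v|_B)|v-\tilde v|_B$. Hence, for $\eps = \eps(\Sigma',m,\alpha)$ small, $\mathcal{T}$ maps the ball $\{|v|_B \le 2c\eps\}$ into itself and contracts it; its fixed point $v$ solves \eqref{eqn:ex-proof-2} on $(-\infty,0)\times\Sigma'$, satisfies $\Pi_J(v)|_{t=0} = \Pi_J(g)$ and $|v|_B \le c|g|_{C^{2,\alpha}}$, and obeys $|v - H_J(g)|_B = |\mathcal{T}(v) - H_J(g)|_B \le c\|Q(v)\|_\ast \le c|v|_B^2 \le c|g|_{C^{2,\alpha}}^2$, since $Q(0)=0$ forces $\mathcal{T}(0) = H_J(g)$. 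Finally, standard Schauder bootstrapping of the equation upgrades $v$ to $C^{2,\alpha}_{\mathrm{loc}}((-\infty,0]\times\Sigma')$, which is the assertion.

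The main obstacle is the linear step. One must assemble the right-inverse of $\mathcal{L}$ correctly across all three eigenvalue regimes $\lambda_j < -1/4$, $\lambda_j = -1/4$, $\lambda_j > -1/4$ (oscillatory, resonant, exponential), verify that finiteness of the weighted norm $|\cdot|_B$ exactly separates the finitely many ``free'' high modes (pinned by $\Pi_J g$) from the ``forced'' low modes, and — crucially for the iteration to close — check that along the non-resonant modes the solution decays \emph{strictly} faster than the source, so that the quadratic smallness $\|Q(v)\|_\ast \le C|v|_B^2$ genuinely produces a quadratic gain in $\mathcal{T}$. This is precisely where the hypotheses $\gamma_J^+ < m < \gamma_{J+1}^+$ and $m\ne\gamma_j^\pm$ are used. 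The geometric content of the problem enters only through the structural form of $E,F$ in \eqref{eqn:dini-.1}, \eqref{eqn:dini-.2} and the explicit expression $L'|_\theta = \tfrac{1}{n^2}(\Delta_\Sigma + |A_\Sigma|^2 + (n-1))|_{\pi(\theta)}$ recorded above, which guarantee the stated quasilinear structure of \eqref{eqn:ex-proof-2}; the remainder is the functional-analytic machinery of \cite{CHS, Sim85} transplanted to the cover $\Sigma'$.
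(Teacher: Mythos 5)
Your proposal is correct and follows essentially the same route as the paper, which does not reprove this statement but simply invokes \cite{CHS, Sim85}: the argument there is exactly your scheme of a mode-by-mode right inverse for $\ddot w + \dot w + L'w$ in the weighted space (low modes forced by Duhamel from $-\infty$, high modes pinned by the $\Pi_J$-data), combined with interior Schauder estimates on unit strips and a contraction mapping for the quadratically small remainder $Q(v)$. The points you flag (working in weighted $C^{2,\alpha}$ rather than $C^2$ norms, avoiding resonances via $m\neq\gamma_j^\pm$ and $\gamma_J^+<m<\gamma_{J+1}^+$) are precisely the technical care taken in those references, so no gap remains.
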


To prove Theorem \ref{thm:main-3}, we can simply apply Theorem \ref{thm:chs} to $g = \eps' \phi_j$ for $\phi_j$ ($j > J$) any choice of eigenfunction that \emph{does not} descend to a single-valued function on $\Sigma$.  For each sufficiently small $\eps'$ we obtain a solution $v_{\eps'}$ satisfying
\begin{equation}\label{eqn:ex-proof-3}
|v_{\eps'} - \eps' \phi_j e^{\gamma_j^+ t}|_B \leq c \eps'^2.
\end{equation}
For $\eps'$ sufficiently small \eqref{eqn:ex-proof-3} will guarantee $v_{\eps'}$ does not descend to a single-valued function on $\BC \cap A_{1, 0}$ either, i.e. if we define $u_{\eps'} \in C^2_q(\Sigma)$ from $v_{\eps'}$ like in \eqref{eqn:ex-proof-1} then the $u_{\eps'}$ will \emph{not} be single-valued.  Therefore $\graph_\BC(u_{\eps'}) \cap A_{1, 0}$ will be a genuinely immersed but not embedded smooth minimal surface in $A_{1, 0}$, with tangent cone at $0$ being $q|\BC|$.

We note such a $\phi_j$ always exists by the Weyl law for Schrödinger operators (see e.g. \cite[Corollary 1.2]{ChRo}): if every $\phi_j$ for $j \gg 1$ descended to a single-valued eigenfunction on $\Sigma$, then the spectrum with multiplicity of $L'$ on $\Sigma'$ would asymptotically be the same as the spectrum of $L$ on $\Sigma$, contradicting the fact that the volume of $\Sigma'$ is at least twice the volume of $\Sigma$.

To verify strict stability of $\graph_\BC(u_{\eps'})$, we first note that the first eigenfunction $\phi_1$ of $L'$, being multiplicity-one, necessarily descends to the first eigenfunction of $L = \frac{1}{n^2}(\Delta_\Sigma + |A_\Sigma|^2 + (n-1))$ on $\Sigma$.  So if $\BC$ is strictly stable, then $\lambda_1 > -1/4$, and the argument to show strict stability of $\graph_\BC(u_{\eps'})$ proceeds as in \cite{CHS}. \qed

\bibliographystyle{amsalpha}
\bibliography{references.bib}
\end{document}